\newtheorem{thm}{Theorem}[section]
\newtheorem{lem}[thm]{Lemma}
\newtheorem{prop}[thm]{Proposition}
\theoremstyle{definition}
\newtheorem{defn}[thm]{Definition}
\theoremstyle{remark}
\numberwithin{equation}{section}
\newcommand{\N}{\mathbb N}
\newcommand{\M}{\mathbb M}
\newcommand{\R}{\mathbb R}
\newcommand{\dist}{\operatorname{dist}}
\begin{document}



\title[Adhesion and volume filling in 1-D population dynamics]{Adhesion and volume filling in\\one-dimensional population dynamics \\under no-flux boundary condition}

\author{Hyung Jun Choi}
\address{School of Liberal Arts, Korea University of Technology and Education, Cheonan 31253, Republic of Korea}
\curraddr{}
\email{hjchoi@koreatech.ac.kr}
\thanks{}

\author{Seonghak Kim}
\address{Department of Mathematics, College of Natural Sciences, Kyungpook National University, Daegu 41566, Republic of Korea}
\curraddr{}
\email{shkim17@knu.ac.kr}
\thanks{}

\author{Youngwoo Koh}
\address{Department of Mathematics Education, Kongju National University, Kongju 32588, Republic of Korea}
\curraddr{}
\email{ywkoh@kongju.ac.kr}
\thanks{}

\subjclass[2010]{Primary 35M13, Secondary 35K59, 35D30, 92D25}

\keywords{Population model, forward-backward-forward type, no-flux boundary condition, adhesion and volume filling, partial differential inclusion, convex integration}

\date{}

\dedicatory{}

\begin{abstract}
We study the (generalized) one-dimensional population model developed by Anguige \& Schmeiser \cite{AS}, which reflects cell-cell adhesion and volume filling under no-flux boundary condition. In this generalized model, depending on the adhesion and volume filling parameters $\alpha,\beta\in[0,1],$ the resulting equation is classified into six types.
Among these, we focus on the type exhibiting strong effects of both adhesion and volume filling, which results in a class of advection-diffusion equations of the forward-backward-forward type. For five distinct cases of initial maximum, minimum and average population densities, we derive the corresponding patterns for the global behavior of weak solutions to the initial and no-flux boundary value problem.  Due to the presence of a negative diffusion regime, we indeed prove that the problem is ill-posed and admits infinitely many global-in-time weak solutions, with the exception of one specific case of the initial datum. This nonuniqueness is inherent in the method of convex integration that we use to solve the Dirichlet problem of a partial differential inclusion arising from the ill-posed problem.
\end{abstract}

\maketitle

\tableofcontents

\section{Introduction}

When individuals correlate their movements with others, spontaneous aggregations emerge in various natural systems, such as flocks, swarms and herds \cite{JRD,WBTB}; bacteria and cancer cells \cite{ASKIB,ME}; and human groupings in cities and towns \cite{YZX}.

To understand the collective movement of individuals in response to the presence of others, advection-diffusion equations with nonlocal advection terms have been extensively studied in contexts such as cell sorting behaviors \cite{APS}; locust swarms \cite{BT}; home range formation in wolves \cite{BLP}; and human pedestrian traffic \cite{CGL}. More recently, a local continuum model of thin-film type equations was derived in \cite{FBC} from a general nonlocal model in the limit of short-range interactions, aiming to observe cell sorting patterns and explore the relationship between model parameters and the differential adhesion hypothesis.

While many modern biological aggregation models rely on nonlocal continuum approaches, discrete-space models have also been historically utilized (see \cite{APS,FBC} for references).
In particular, to account for an inhomogeneous distribution of cells that can move randomly to some extent but are also influenced by cell-cell adhesion and volume filling, {Anguige \& Schmeiser} \cite{AS} proposed a one-dimensional discrete-space model in continuous time for linear stability analysis and steady-state solutions. They also examined the continuum limit of their discrete-space model as the mesh size $h\to 0^+$, which leads to the initial-Neumann boundary value problem of a quasilinear advection-diffusion equation:
\begin{equation}\label{ibp-second}
\begin{cases}
  u_t=(\rho(u))_{xx} & \mbox{in $\Omega\times(0,\infty)$}, \\
  u=u_0 & \mbox{on $\Omega\times\{t=0\}$}, \\
  u_x(0,t)=u_x(1,t)=0 & \mbox{for $t>0$},
\end{cases}
\end{equation}
where $u=u(x,t)$ denotes the population density of a single species at position $x$ and time $t$, $\Omega=(0,1)\subset\R$ is a favorable habitat of unit size, $u_0=u_0(x)$ is an initial population density, and
the flux function $\rho$ is given by
\begin{equation}\label{rho-function-AS}
\rho(s)=\alpha s^3-2\alpha s^2+s\quad(s\in\R),
\end{equation}
with the adhesion parameter $\alpha\in [0,1]$.
Here, the Neumann (or reflecting) boundary condition should imply that the total population remains constant over time.

In the model described in \cite{AS}, if the species is highly adhesive, meaning $\alpha>3/4$ in (\ref{rho-function-AS}), the diffusivity $\sigma(s)=\rho'(s)$ can be negative over a nonempty open interval. This results in the equation $u_t=(\rho(u))_{xx}$ being \emph{backward parabolic} for population density values $s=u$ within that interval. Consequently, if the initial population density $u_0$ falls within this backward regime, problem (\ref{ibp-second}) may be ill-posed. This issue led the authors of \cite{AS} to anticipate certain pattern formations in the population density $u$ over time. However, due to the inapplicability of standard parabolic theory methods, no existence results were available for such a highly adhesive continuum model until the authors of this paper investigated, in \cite{CKK}, the existence, nonuniqueness and global behaviors of weak solutions to problem (\ref{ibp-second}) with the Neumann boundary condition replaced by the Dirichlet boundary condition.

In fact, the work \cite{CKK} reformulates the equation in problem (\ref{ibp-second})(\ref{rho-function-AS}) using a generalized flux function:
\begin{equation}\label{rho-function-CKK}
\rho(s)=\alpha\beta s^3-2\alpha s^2+s\quad(s\in\R),
\end{equation}
where $\alpha,\beta\in [0,1]$ are the adhesion and volume filling parameters, respectively. The case with the strongest volume filling effect, $\beta=1$, is studied in \cite{AS}. In this generalized model, the diffusivity $\sigma(s)=\rho'(s)$ is given by
\[
\sigma(s)=3\alpha\beta s^2-4\alpha s+1\quad(s\in\R).
\]
Here, the compact set $[0,1]^2$ of $(\alpha,\beta)$ is divided into six disjoint subsets, each corresponding to a distinct type of problem (\ref{ibp-second})(\ref{rho-function-CKK}) (see Figure \ref{fig1}). For example, (FDBDF) denotes the case where, for some $0<s^-_0<s^+_0<1$, the diffusivity  $\sigma(s)$ is positive (forward parabolic) on $[0,s^-_0)\cup(s^+_0,1]$, negative (backward parabolic) on $(s^-_0,s^+_0)$, and zero (degenerate) at $s=s^\pm_0.$ The case (F) falls within the realm of standard parabolic theory \cite{Ln}. The cases (FDF) and (FD) can be addressed using the theory of degenerate parabolic equations \cite{Di}. The remaining three cases, (FDB), (FDBD) and (FDBDF), which involve a backward diffusion regime, can be managed using  the convex integration method \cite{MSv2}, as demonstrated in our previous work \cite{CKK} for the Dirichlet boundary condition and in this paper for the no-flux boundary condition.

\begin{figure}[ht]
\begin{center}
\begin{tikzpicture}[scale =0.9]
    \draw[->] (-0.5,0) -- (5.5,0);
    \draw[->] (0,-0.5) -- (0,5.5);
    \draw[thick] (0,0) -- (5,0);
    \draw[thick] (5,0) -- (5,5);
    \draw[thick] (5,5) -- (0,5);
    \draw[thick] (0,5) -- (0,0);
    \draw[dashed] (0,0) -- (5/2,10/3);
    \draw[dashed] (0,10/3) -- (5/2,10/3);
    \draw[dashed] (5/2,10/3) -- (5/2,0);
    \draw[thick] (5/2,10/3) -- (15/4,5);
    \draw[thick] (5/4,0) .. controls (2,3.95) and  (3.7,4.5) .. (5,5);
    \draw (0,5.5) node[left] {$\beta$};
    \draw (5.5,0) node[below] {$\alpha$};
    \draw (0,5) node[left] {$1$};
    \draw (5,0) node[below] {$1$};
    \draw (0,10/3) node[left] {$\frac{2}{3}$};
    \draw (5/2,0) node[below] {$\frac{1}{2}$};
    \draw (2.5/2,0) node[below] {$\frac{1}{4}$};
    \draw (15/4,5) node[above] {$\beta=\frac{4}{3}\alpha$};
    \draw (5.9,5) node[above] {$\beta=-\frac{1}{3\alpha}+\frac{4}{3}$};
    \draw (1,5/2) node[] {(F)};
    \draw (3.5,5/2) node[] {(FDB)};
    \draw[dashed,->] (4,4.8) -- (5.5,4.8);
    \draw[dashed,->] (3.55,4.3) -- (5.5,4.3);
    \draw[dashed,->] (3.2,4.3) -- (2,5.2);
    \draw[dashed,->] (1.5,1.1) -- (1.5,-0.6);
    \draw (6.5,4.8) node[] {(FDBDF)};
    \draw (6.5,4.3) node[] {(FDBD)};
    \draw (2,5.5) node[] {(FDF)};
    \draw (1.5,-0.9) node[] {(FD)};

    \end{tikzpicture}
\end{center}
\caption{Classification of problem (\ref{ibp-second})(\ref{rho-function-CKK}).}
\label{fig1}
\end{figure}


In this paper, we focus on the generalized model in \cite{CKK}, which originated from \cite{AS}, and consider a specific type of the resulting equation that yields a class of advection-diffusion equations of the forward-backward-forward type. Specifically, we analyze the case (FDBDF)  with the flux function (\ref{rho-function-CKK}) (see Figure \ref{fig2}),where
\[
\frac{2}{3}<\beta\le 1\quad\mbox{and}\quad\frac{3}{4}\beta<\alpha<\frac{1}{4-3\beta}.
\]
In this scenario, there are exactly two numbers $0<s^-_0<s^+_0<1$ such that $\sigma(s^\pm_0)=\rho'(s^\pm_0)=0.$
We then study the existence and asymptotic behaviors of global-in-time weak solutions to the initial and no-flux boundary value problem:
\begin{equation}\label{ib-P-intro}
\begin{cases}
  u_t=(\rho(u))_{xx} & \mbox{in $\Omega\times(0,\infty)$}, \\
  u=u_0 & \mbox{on $\Omega\times\{t=0\}$}, \\
  \sigma(u)u_x=0 & \mbox{on $\partial\Omega\times(0,\infty)$},
\end{cases}
\end{equation}
where $\Omega=(0,L)\subset\R$ is a habitat of size $L>0$.

The main result of this paper, in a simplified form,  is as follows.

\begin{thm}\label{thm:main-simplified}
Let $u_0\in C^{2+a}(\bar\Omega;[0,1])$ for some $0<a<1,$ and assume $u'_0(0)=u'_0(L)=0.$ If $u_0(\bar\Omega)\cap [s^-_0,s^+_0]\ne\emptyset,$ then
problem \eqref{ib-P-intro} admits infinitely many global weak solutions $u\in L^\infty(\Omega\times(0,\infty);[0,1])$, all of which conserve the initial total population for all time.
\end{thm}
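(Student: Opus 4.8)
The plan is to recast problem \eqref{ib-P-intro} as a partial differential inclusion and to solve it by convex integration, following the scheme of \cite{CKK} but with the lateral condition adapted to the no-flux setting. I would first introduce the flux $w=\rho(u)$ and note that the equation $u_t=w_{xx}$ is \emph{linear} in the pair $(u,w)$: the only nonlinearity sits in the pointwise algebraic constraint $w=\rho(u)$. Writing $s=w_x$, the two conservation laws $u_t=s_x$ and $w_x=s$ are encoded by a vector potential $\mathbf z=(z^1,z^2)$ whose space-time gradient
\[
\nabla\mathbf z=\begin{pmatrix} z^1_x & z^1_t\\ z^2_x & z^2_t\end{pmatrix}=\begin{pmatrix} u & s\\ s & r\end{pmatrix}
\]
is forced to be symmetric, together with the nonlinear constraint $z^2=\rho(z^1_x)$ coupling the potential value to the $(1,1)$-entry of its gradient. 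A weak solution then corresponds to a Lipschitz map $\mathbf z$ with $\nabla\mathbf z\in K$ a.e. in $\Omega\times(0,\infty)$, where $K\subset\M^{2\times2}$ is the set cut out by symmetry and by $w=\rho(u)$. The no-flux condition $\sigma(u)u_x=(\rho(u))_x=s=0$ on $\partial\Omega$ becomes a Neumann-type lateral condition for $\mathbf z$, which I would render tractable by even reflection of $u_0$ across $x=0$ and $x=L$, replacing $\Omega$ by a doubled, boundaryless ($2L$-periodic) domain on which the reflected datum is $C^1$ and the flux is odd.

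The heart of the argument is the construction of a \emph{subsolution}. I would replace $\rho$ by its monotone degenerate-parabolic envelope $\rho_*$ on $[0,1]$ --- agreeing with $\rho$ on the forward range and flat across the backward window $(s^-_0,s^+_0)$ --- and solve the well-posed degenerate problem $\bar u_t=(\rho_*(\bar u))_{xx}$ with datum $u_0$ via the theory of \cite{Di}, obtaining a global $\bar u\in[0,1]$ that conserves mass and satisfies the reflected lateral condition. On the open space-time region where $\bar u$ takes values in $(s^-_0,s^+_0)$ --- which can be shown to be nonempty precisely because the hypothesis $u_0(\bar\Omega)\cap[s^-_0,s^+_0]\ne\emptyset$ prevents the evolution from staying in a forward regime --- the pair $(\bar u,\rho_*(\bar u))$ lies strictly inside the lamination-convex hull $K^{\mathrm{lc}}$ of $K$, so $\mathbf{\bar z}$ is a \emph{strict} subsolution there, while outside it is an exact solution. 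I would then invoke the abstract convex integration theorem of \cite{MSv2}, in in-approximation/Baire-category form: successive laminate corrections along the wave cone compatible with the symmetry constraint produce, from $\mathbf{\bar z}$, a residual (hence infinite) set of exact Lipschitz solutions $\mathbf z$ with $\nabla\mathbf z\in K$ a.e., all coinciding with $\mathbf{\bar z}$ on the parabolic boundary. In particular they share the initial datum $u_0$, respect the no-flux condition, and yield infinitely many distinct weak solutions $u\in L^\infty(\Omega\times(0,\infty);[0,1])$.

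Conservation of total population is then automatic: integrating $u_t=(\rho(u))_{xx}$ over $\Omega$ and using $s=(\rho(u))_x=0$ on $\partial\Omega$ gives $\frac{d}{dt}\int_\Omega u\,dx=0$ for every such solution. For the global-in-time statement I would either run the convex integration directly on $\Omega\times(0,\infty)$ using the global subsolution $\bar u$, or exhaust time by a countable family of finite intervals and concatenate, arranging the corrections to vanish near each time slice so that the pieces match.

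The main obstacle I anticipate is the \emph{compatibility of the subsolution with convex integration}: one must verify that $K^{\mathrm{lc}}$ is genuinely larger than $K$ exactly over the backward window (so that oscillations are available), that $\nabla\mathbf{\bar z}$ stays uniformly in the open interior of $K^{\mathrm{lc}}$ away from the degenerate values $s^\pm_0$, and that the in-approximation can be carried out while simultaneously honoring the symmetry of $\nabla\mathbf z$ and the reflected lateral condition. This is precisely the step that collapses in the excluded case $u_0(\bar\Omega)\cap[s^-_0,s^+_0]=\emptyset$, where no backward region forms, $K^{\mathrm{lc}}=K$ along the relevant values, and the solution is therefore unique.
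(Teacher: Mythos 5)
Your overall scheme---recasting the equation as an inhomogeneous two-wall differential inclusion, building a subsolution from a modified well-posed parabolic problem, and applying M\"uller--\v{S}ver\'ak convex integration---is the same as the paper's. But there is a genuine gap at the decisive step, the construction of the subsolution. You take for $\rho_*$ the \emph{monotone (flat) envelope} of $\rho$ and claim that, on the region where the solution $\bar u$ of the degenerate problem takes values in the backward window, the pair $(\bar u,\rho_*(\bar u))$ lies \emph{strictly inside} the lamination-convex hull of $K$. This is false. Any continuous monotone envelope agreeing with $\rho$ outside an interval must be constant at the level $r_1=\rho(s^+_0)$ (or $\rho(s^-_0)$) across a bridge of the backward window, so on that region one has $w^\star_t=\rho_*(\bar u)\equiv r_1$ exactly: the image of $\nabla z^\star$ sits on the horizontal rank-one \emph{edge} of the hull, i.e.\ on $\partial U$, not in the open set $U$. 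The convex integration theorem (the paper's Theorem \ref{thm:two-wall}) requires the strict inequalities \eqref{subsolution-1}, in particular $r_1<w^\star_t<r_2$, precisely because no oscillation can be inserted starting from a boundary point of the constraint region. The paper's fix is the point you are missing: instead of the degenerate envelope it chooses a $C^3$, \emph{uniformly} parabolic modification $\rho^\star$ ($\theta_0\le(\rho^\star)'\le\theta_1$) pinned to $\rho$ at the window endpoints, with $\rho^\star<\rho$ on the lower branch and $\rho^\star>\rho$ on the upper branch (condition \eqref{rho-modi}); this pushes $(u^\star,\rho^\star(u^\star))$ into the open region $U$ on $Q$ and, as a by-product, gives the classical regularity $u^\star\in C^{2+a,1+\frac a2}$ needed for $z^\star\in C^1$, which a strongly degenerate (flat-nonlinearity) equation treated by \cite{Di}-type theory would not supply.

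A second, independent gap is your treatment of the no-flux condition by even reflection onto a $2L$-periodic domain. Restricting a periodic weak solution back to $(0,L)$ satisfies the weak no-flux formulation (test functions with $\varphi_x=0$ on $\partial\Omega$) only if that solution is \emph{even} across $x=0$ and $x=L$; convex integration produces wildly nonsymmetric solutions, and the cited machinery gives no way to enforce equivariance of all the corrections. The paper never reflects: it works on $\Omega$ itself, keeps $Q\subset\Omega_\infty$ so that $z=z^\star$ on the lateral boundary, and verifies Definition \ref{def:global-weak-sol} by integrating by parts in the potentials $(v,w)$, using that $u^\star$ satisfies the classical no-flux condition for $\rho^\star$. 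Relatedly, your mass-conservation argument (``integrate $u_t=(\rho(u))_{xx}$ and use $(\rho(u))_x=0$ on $\partial\Omega$'') is only formal for $u\in L^\infty$, since $(\rho(u))_x$ has no trace; the paper instead gets conservation from $\int_\Omega u\,dx=v(L,t)-v(0,t)=v^\star(L,t)-v^\star(0,t)=L\bar u_0$. Finally, note that the full statement requires case-dependent choices of the window (and, when $\bar u_0\in[s^-_0,s^+_0]$, an iteration over time intervals with a two-stage gluing in type (II)); your single uniform construction would have to be adapted to these cases even after the subsolution is repaired.
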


This theorem will be detailed through four theorems: Theorems \ref{thm:(ii)-1}, \ref{thm:(ii)-2}, \ref{thm:(iii)-1} and \ref{thm:(iv)}, each reflecting different global behaviors of solutions depending on the initial population density $u_0.$


\underline{\textbf{Plan of the paper:}} In the remainder of this section, we introduce the notations that will be used throughout the paper and derive a natural definition of a global weak solution to problem (\ref{ib-P-intro}). Section \ref{sec:preliminaries} gathers some preliminary results from  classical parabolic theory. The main result of the paper, Theorem \ref{thm:main-simplified}, is developed in section \ref{sec:mainresult}, along with a strategy for obtaining the detailed results, Theorems \ref{thm:(ii)-1}, \ref{thm:(ii)-2}, \ref{thm:(iii)-1} and \ref{thm:(iv)}. Section \ref{sec:generic} is devoted to formulating a differential inclusion problem and presenting its special solvability result as Theorem \ref{thm:two-wall}, which is a crucial component for proving  Theorems \ref{thm:(ii)-1}, \ref{thm:(ii)-2}, \ref{thm:(iii)-1} and \ref{thm:(iv)} in section \ref{sec:proof_main}.

\subsection{Notations} Hereafter, let $L>0$ denote the size of a favorable habitat $\Omega=(0,L)\subset\R$.
For $\tau\in(0,\infty],$ we write
\[
\Omega_\tau:=\Omega\times(0,\tau)\subset\R^2.
\]

Let $k$, $m$ and $n$ be positive integers. Let $0<a<1$ and $U\subset\R^n$ be an open set.
\begin{itemize}
\item[(i)] We denote by $C^k(U)$ the space of functions $u:U\to\R$ whose partial derivatives of order up to $k$ exist and are continuous in $U$.
\item[(ii)] Let $C^k(\bar{U})$ be the space of functions $u\in C^k(U)$ whose partial derivatives of order up to $k$ are uniformly continuous in every bounded subset of $U$.
\item[(iii)] Let $n=2$; so $U\subset\R^2=\R_x\times \R_t.$ We define $C^{2,1}(U)$ to be the space of functions $u:U\to\R$ such that $u_x$, $u_{xx}$ and $u_t$ exist and are continuous in $U$. Also, let $C^{2,1}(\bar{U})$ denote the space of functions $u\in C^{2,1}(U)$ such that $u_x$, $u_{xx}$ and $u_t$ are uniformly continuous in every bounded subset of $U$.
\item[(iv)] We denote by $C^{2+a}(\bar\Omega)$ the space of functions $u\in C^{2}(\bar\Omega)$ with
\[
\sup_{x,y\in\Omega,\,x\ne y}\frac{|u_{xx}(x)-u_{xx}(y)|}{|x-y|^a}<\infty.
\]
\item[(v)] Let $0<T<\infty.$  We define $C^{2+a,1+\frac{a}{2}}(\bar\Omega_T)$ to be the space of functions $u\in C^{2,1}(\bar\Omega_T)$ whose quantities
\[
\sup_{x,y\in\Omega,\,x\ne y,\,0<t<T}\frac{|u_{xx}(x,t)-u_{xx}(y,t)|}{|x-y|^a},\; \sup_{x\in\Omega,\,0<s,t<T,\,s\ne t}\frac{|u_{xx}(x,s)-u_{xx}(x,t)|}{|s-t|^\frac{a}{2}},
\]
\[
\sup_{x,y\in\Omega,\,x\ne y,\,0<t<T}\frac{|u_{t}(x,t)-u_{t}(y,t)|}{|x-y|^a},\; \sup_{x\in\Omega,\,0<s,t<T,\,s\ne t}\frac{|u_{t}(x,s)-u_{t}(x,t)|}{|s-t|^\frac{a}{2}}
\]
are all finite.
\item[(vi)] We denote by $\M^{m\times n}$  the space of $m\times n$ real matrices.
\item[(vii)] For a Lebesgue measurable set $E\subset\R^n,$ its $n$-dimensional measure is denoted by $|E|=|E|_n$ with subscript $n$ omitted if it is clear from the context.
\item[(viii)] We use  $W^{k,p}(U)$ to denote the space of functions $u\in L^{p}(U)$ whose weak partial derivatives of order up to $k$ exist in $U$ and belong to $L^{p}(U)$.
\end{itemize}

\subsection{Global weak solutions}
To derive a natural definition of a weak solution to problem (\ref{ib-P-intro}), let $u_0\in C^2(\bar\Omega;[0,1])$ be such that
\[
\sigma(u_0)u_0'=0\quad\mbox{on $\partial\Omega=\{0,L\}$}.
\]

Assume that $u\in C^{2,1}(\bar\Omega_\infty;[0,1])$ is a global classical solution to problem (\ref{ib-P-intro}). Fix any $T>0$, and choose a test function $\varphi\in C^\infty(\bar\Omega\times [0,T])$ such that
\[
\varphi=0\;\;\mbox{on $\bar\Omega\times\{t=T\}$}\;\;\mbox{and}\;\;\varphi_x=0\;\;\mbox{on $\partial\Omega\times[0,T]$}.
\]
Then from the integration by parts,
\[
\begin{split}
0= &\, \int_0^T\int_0^L (u_t -(\rho(u))_{xx})\varphi\,dxdt \\
= &\, \int_0^L (u(x,T)\varphi(x,T)- u(x,0)\varphi(x,0))\,dx - \int_0^T\int_0^L u\varphi_t\,dxdt \\
&\, - \int_0^T (\sigma(u(L,t))u_x(L,t)\varphi(L,t)-\sigma(u(0,t))u_x(0,t)\varphi(0,t))\,dt\\
&\, + \int_0^T\int_0^L (\rho(u))_x\varphi_x\,dxdt \\
= &\, \int_0^L (u(x,T)\varphi(x,T)- u(x,0)\varphi(x,0))\,dx - \int_0^T\int_0^L u\varphi_t\,dxdt \\
&\, - \int_0^T (\sigma(u(L,t))u_x(L,t)\varphi(L,t)-\sigma(u(0,t))u_x(0,t)\varphi(0,t))\,dt\\
&\, + \int_0^T (\rho(u(L,t))\varphi_{x}(L,t)-\rho(u(0,t))\varphi_{x}(0,t))\,dt\\
&\, - \int_0^T\int_0^L \rho(u)\varphi_{xx}\,dxdt \\
= &\, -\int_0^L u_0(x)\varphi(x,0)\,dx - \int_0^T\int_0^L (u\varphi_t +\rho(u)\varphi_{xx})\,dxdt.
\end{split}
\]

Conversely, assume that $u\in C^{2,1}(\bar\Omega_\infty;[0,1])$ is a function satisfying that
\[
\int_0^T\int_0^L (u\varphi_t +\rho(u)\varphi_{xx})\,dxdt+\int_0^L u_0(x)\varphi(x,0)\,dx=0
\]
for all $T>0$ and $\varphi\in C^\infty(\bar\Omega\times [0,T])$ with
\[
\varphi=0\;\;\mbox{on $\bar\Omega\times\{t=T\}$}\;\;\mbox{and}\;\;\varphi_x=0\;\;\mbox{on $\partial\Omega\times[0,T]$}.
\]
We will check below that $u$ is a global classical solution to problem (\ref{ib-P-intro}). 

To show that the first of (\ref{ib-P-intro}) holds, fix any $\varphi\in C^\infty_c(\Omega_\infty).$ Choose a $T=T_\varphi>0$ so large that $\mathrm{spt}(\varphi)\subset\subset \Omega\times(0,T).$ Then from the integration by parts,
\[
\begin{split}
0= &\, \int_0^T\int_0^L (u\varphi_t +\rho(u)\varphi_{xx})\,dxdt+\int_0^L u_0(x)\varphi(x,0)\,dx \\
= &\, \int_0^T\int_0^L (-u_t +(\rho(u))_{xx})\varphi\,dxdt=\int_0^\infty\int_0^L (-u_t +(\rho(u))_{xx})\varphi\,dxdt.
\end{split}
\]
Thus, the first of (\ref{ib-P-intro}) is satisfied.

Next, to check that the second of (\ref{ib-P-intro}) holds, fix any $\psi\in C^\infty_c(\Omega)$. Choose a function $\omega\in C^\infty(\R)$ such that
\[
\omega=1\;\;\mbox{on $(-\infty,0]$}\;\;\mbox{and}\;\;\omega=0\;\;\mbox{on $[2,\infty)$},
\]
and define $\varphi(x,t)=\psi(x)\omega(t)$ for $(x,t)\in\bar{\Omega}_\infty.$
Then with $T=2,$ it follows from the first of (\ref{ib-P-intro}) that
\[
\begin{split}
0= &\, \int_0^2\int_0^L (u\varphi_t +\rho(u)\varphi_{xx})\,dxdt+\int_0^L u_0(x)\varphi(x,0)\,dx \\
= &\, \int_0^L (u(x,2)\varphi(x,2)-u(x,0)\varphi(x,0))\,dx -\int_0^2\int_0^L u_t\varphi\,dxdt \\
&\, +\int_0^2 (\rho(u(L,t))\varphi_{x}(L,t)- \rho(u(0,t))\varphi_{x}(0,t))\,dt\\
&\, -\int_0^2 ( (\rho(u))_x (L,t)\varphi(L,t)-(\rho(u))_x (0,t)\varphi(0,t)) \,dt\\
&\, +\int_0^2\int_0^L (\rho(u))_{xx}\varphi\,dxdt  +\int_0^L u_0(x)\varphi(x,0)\,dx\\
= &\, \int_0^L (u_0(x)-u(x,0))\psi(x)\,dx.
\end{split}
\]
Thus, the second of (\ref{ib-P-intro}) is true.

Finally, to see that the third of (\ref{ib-P-intro}) holds, fix any $\omega\in C^\infty_c(0,\infty).$ Let $T=T_{\omega}>0$ be chosen so large that $\mathrm{spt}(\omega)\subset\subset(0,T)$.
Choose two functions $\psi_0,\psi_1\in C^\infty(\R)$ such that
\[
\psi_0(x)=\begin{cases}
            1 & \mbox{for $x\le\frac{1}{4}L$} \\[2mm]
            0 & \mbox{for $x\ge\frac{3}{4}L$}
          \end{cases}
          \;\;\mbox{and}\;\;
\psi_1(x)=\begin{cases}
            0 & \mbox{for $x\le\frac{1}{4}L$} \\[2mm]
            1 & \mbox{for $x\ge\frac{3}{4}L$}.
          \end{cases}
\]
For $i=0,1$, define $\varphi_i(x,t)=\psi_i(x)\omega(t)$ for $(x,t)\in\bar\Omega_\infty.$
Then we have from the first of (\ref{ib-P-intro}) that for $i=0,1$,
\[
\begin{split}
0=  &\, \int_0^L (u(x,T)\varphi_i(x,T)-u(x,0)\varphi_i(x,0))\,dx -\int_0^T\int_0^L u_t\varphi_i\,dxdt \\
&\, +\int_0^T (\rho(u(L,t))(\varphi_i)_{x}(L,t)- \rho(u(0,t))(\varphi_i)_{x}(0,t))\,dt\\
&\, -\int_0^T ( (\rho(u))_x (L,t)\varphi_i(L,t)-(\rho(u))_x (0,t)\varphi_i(0,t))\,dt\\
&\, +\int_0^T\int_0^L (\rho(u))_{xx}\varphi_i\,dxdt  +\int_0^L u_0(x)\varphi_i(x,0)\,dx\\
= &\, (-1)^i\int_0^T (\rho(u))_x (iL,t)\omega(t)\,dt=(-1)^i\int_0^\infty (\rho(u))_x (iL,t)\omega(t)\,dt;
\end{split}
\]
that is, $\sigma(u(x,t))u_x(x,t)=(\rho(u))_x (x,t)=0$ for all $(x,t)\in\partial\Omega\times(0,\infty)$. Hence, the third of (\ref{ib-P-intro}) follows.

Summarizing the previous discussion, we have the following.

\begin{prop}\label{prop:classical-global}
Let $u_0\in C^2(\bar\Omega;[0,1])$ satisfy the compatibility condition,
\[
\sigma(u_0)u_0'=0\quad\mbox{on $\partial\Omega$},
\]
and $u\in C^{2,1}(\bar\Omega_\infty;[0,1]).$
Then $u$ is a global classical solution to problem \eqref{ib-P-intro} if and only if
\[
\int_0^T\int_0^L (u\varphi_t +\rho(u)\varphi_{xx})\,dxdt+\int_0^L u_0(x)\varphi(x,0)\,dx=0
\]
for all $T>0$ and $\varphi\in C^\infty(\bar\Omega\times [0,T])$ with
\[
\varphi=0\;\;\mbox{on $\bar\Omega\times\{t=T\}$}\;\;\mbox{and}\;\;\varphi_x=0\;\;\mbox{on $\partial\Omega\times[0,T]$}.
\]
\end{prop}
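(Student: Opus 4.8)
The plan is to verify the two implications by reversing, respectively carrying out, the integration by parts that links the classical formulation \eqref{ib-P-intro} to the proposed integral identity; the entire argument is in fact encoded in the computation preceding the statement, and I would simply organize it into the two directions.

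For the ``only if'' direction, I would start from the assumption that $u$ solves \eqref{ib-P-intro} classically and test the first equation against an admissible $\varphi$, so that $0=\int_0^T\int_0^L (u_t-(\rho(u))_{xx})\varphi\,dxdt$. I would then integrate by parts once in $t$ and twice in $x$. The temporal integration produces a boundary contribution at $t=T$, which vanishes because $\varphi=0$ there, and one at $t=0$, which equals $-\int_0^L u_0\varphi(x,0)\,dx$ after invoking the initial condition $u(\cdot,0)=u_0$. The two spatial integrations each generate a boundary term on $\partial\Omega$: the first involves $(\rho(u))_x=\sigma(u)u_x$, which vanishes by the no-flux condition; the second involves $\rho(u)\varphi_x$, which vanishes because $\varphi_x=0$ there. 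What survives is exactly the claimed identity.

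For the converse, I would assume the integral identity and recover the three conditions of \eqref{ib-P-intro} one at a time by specializing $\varphi$. First, choosing $\varphi\in C^\infty_c(\Omega_\infty)$ makes every boundary term disappear and the initial term drop out, so integrating by parts back yields $\int_0^\infty\int_0^L(-u_t+(\rho(u))_{xx})\varphi\,dxdt=0$ for all such $\varphi$; the fundamental lemma of the calculus of variations then gives the interior equation. Second, with the equation in hand, I would take $\varphi(x,t)=\psi(x)\omega(t)$, where $\psi\in C^\infty_c(\Omega)$ and $\omega$ is a fixed cutoff equal to $1$ near $t=0$, and integrate by parts using the equation; all interior and flux terms cancel and only $\int_0^L(u_0-u(\cdot,0))\psi\,dx=0$ remains, forcing the initial condition pointwise on $\bar\Omega$ by continuity. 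Third, for the boundary condition I would use the localized test functions $\varphi_i(x,t)=\psi_i(x)\omega(t)$ with $\omega\in C^\infty_c(0,\infty)$ and $\psi_i$ equal to $1$ at the $i$-th endpoint and $0$ at the other, as defined above; these are admissible precisely because $\psi_i'$ vanishes at both endpoints, so the integration by parts leaves only the boundary flux, yielding $\int_0^\infty(\rho(u))_x(iL,t)\omega(t)\,dt=0$ for every $\omega$, hence $\sigma(u)u_x=0$ on $\partial\Omega\times(0,\infty)$.

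The only real subtlety, and the step I would treat most carefully, is the choice of test functions in the converse that isolate each condition in turn, especially the boundary condition. Its recovery hinges on admissible test functions that are nonzero on $\partial\Omega$ yet have vanishing spatial derivative there, so that after integrating by parts (and using the already-established interior equation to cancel the bulk integral) the single surviving term is exactly the boundary flux $(\rho(u))_x$; the functions $\psi_i$ are engineered for precisely this purpose. Everything else is routine integration by parts combined with the fundamental lemma of the calculus of variations.
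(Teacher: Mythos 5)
Your proposal is correct and follows essentially the same argument as the paper: the ``only if'' direction is the paper's integration-by-parts computation using $\varphi=0$ at $t=T$, $\varphi_x=0$ on $\partial\Omega$, and the no-flux condition, while the converse recovers the PDE, the initial condition, and the boundary condition by exactly the paper's three choices of test functions (compactly supported $\varphi$, then $\psi(x)\omega(t)$ with $\omega=1$ near $t=0$, then $\psi_i(x)\omega(t)$ with $\psi_i$ constant near each endpoint). Your identification of the admissibility of the $\psi_i$ (nonzero at an endpoint but with vanishing derivative there) as the key point in isolating the boundary flux is precisely the mechanism the paper exploits.
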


Motivated by this observation, we fix the definition of a global weak solution to problem (\ref{ib-P-intro}) as follows.

\begin{defn}\label{def:global-weak-sol}
Let $u_0\in L^\infty(\Omega;[0,1])$ and $u\in L^\infty(\Omega_\infty;[0,1]).$
Then we say that $u$ is a \emph{global weak solution} to problem (\ref{ib-P-intro}) provided that
for all $T>0$ and $\varphi\in C^\infty(\bar\Omega\times [0,T])$ with
\[
\varphi=0\;\;\mbox{on $\bar\Omega\times\{t=T\}$}\;\;\mbox{and}\;\;\varphi_x=0\;\;\mbox{on $\partial\Omega\times[0,T]$},
\]
one has
\begin{equation}\label{eq:weaksol}
\int_0^T\int_0^L (u\varphi_t +\rho(u)\varphi_{xx})\,dxdt+\int_0^L u_0(x)\varphi(x,0)\,dx=0.
\end{equation}
\end{defn}

\section{Preliminaries}\label{sec:preliminaries}
In this section, we consider the initial and no-flux boundary value problem:
\begin{equation}\label{ib-P-pre}
\begin{cases}
  u_t=(\rho(u))_{xx} & \mbox{in $\Omega_\infty$,} \\
  u=u_0 & \mbox{on $\Omega\times\{t=0\}$,} \\
  \sigma(u)u_x=0 & \mbox{on $\partial\Omega\times(0,\infty)$},
\end{cases}
\end{equation}
where
\begin{equation}\label{assume-sigma-1}
\rho\in C^2(\R)\;\;\mbox{and}\;\;\sigma:=\rho'>0\;\;\mbox{in $\R$}.
\end{equation}
In the following, we gather some useful properties that any classical solution to problem (\ref{ib-P-pre}) should satisfy.

\begin{lem}\label{lem:classical}
Let $u\in C^{2,1}(\bar\Omega_\infty)$ be a solution to problem \eqref{ib-P-pre}, where the initial function $u_0\in C^2(\bar\Omega)$ satisfies the compatibility condition:
\[
\sigma(u(0))u_0'(0)=\sigma(u(L))u_0'(L)=0.
\]
Then
\begin{itemize}
\item[(i)]
$
\min_{\bar\Omega}u_0\le u\le\max_{\bar\Omega}u_0\;\;\mbox{in $\Omega_\infty$},
$
\item[(ii)]
$
\frac{1}{L}\int_\Omega u(x,t)\,dx= \bar{u}_0\;\;\mbox{for all $t>0$},
$
\item[(iii)]
$
\|u(\cdot,t)-\bar{u}_0\|_{L^\infty(\Omega)}\to 0\;\;\mbox{as $t\to\infty$},
$
\end{itemize}
where $\bar{u}_0:=\frac{1}{L}\int_\Omega u_0(x)\,dx.$
\end{lem}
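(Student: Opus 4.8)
The plan is to treat \eqref{ib-P-pre} as a uniformly parabolic Neumann problem. The crucial observation is that since $\sigma=\rho'>0$ on $\R$, the no-flux condition $\sigma(u)u_x=0$ is equivalent to the homogeneous Neumann condition $u_x=0$ on $\partial\Omega\times(0,\infty)$, and the equation can be written in nondivergence form as $u_t=\sigma(u)u_{xx}+\sigma'(u)u_x^2$, which is uniformly parabolic on the (a priori bounded) range of $u$.

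For (i), I would freeze the coefficients along the given solution and view $w:=u-M$, with $M:=\max_{\bar\Omega}u_0$, as a solution of the linear equation $w_t=a\,w_{xx}+b\,w_x$, where $a:=\sigma(u)>0$ and $b:=\sigma'(u)u_x$ are continuous and bounded on $\bar\Omega_T$ for each $T$. Since $w\le 0$ at $t=0$ and $w_x=0$ on the lateral boundary, the parabolic maximum principle (together with the Hopf boundary-point lemma to exclude a positive lateral-boundary maximum) gives $w\le 0$, i.e. $u\le M$; the lower bound follows symmetrically with $u-m$, $m:=\min_{\bar\Omega}u_0$. Equivalently, the constants $m$ and $M$ are a sub- and a supersolution, and comparison applies. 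For (ii), I would integrate the equation in $x$ over $\Omega$ and use the fundamental theorem of calculus: $\frac{d}{dt}\int_\Omega u\,dx=\int_\Omega(\rho(u))_{xx}\,dx=\bigl[\sigma(u)u_x\bigr]_{x=0}^{x=L}=0$ by the no-flux condition. Hence $\int_\Omega u(\cdot,t)\,dx$ is constant in $t$ and equals $\int_\Omega u_0\,dx=L\bar u_0$.

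For (iii), which I expect to be the \emph{main obstacle}, I would first prove $L^2$-decay by an energy--dissipation estimate and then upgrade it to uniform convergence. Set $v:=u-\bar u_0$; by (ii), $v(\cdot,t)$ has zero mean for every $t$. Differentiating and integrating by parts (the boundary terms vanish because $(\rho(u))_x=\sigma(u)u_x=0$ on $\partial\Omega$), $\frac12\frac{d}{dt}\int_\Omega v^2\,dx=-\int_\Omega\sigma(u)u_x^2\,dx\le-\sigma_0\int_\Omega v_x^2\,dx$, where $\sigma_0:=\min_{[m,M]}\sigma>0$ by (i) and the positivity and continuity of $\sigma$. The Poincar\'e--Wirtinger inequality for zero-mean functions on $(0,L)$ then yields $\frac{d}{dt}\int_\Omega v^2\,dx\le-\frac{2\sigma_0\pi^2}{L^2}\int_\Omega v^2\,dx$, and Gronwall's inequality gives exponential decay $\|u(\cdot,t)-\bar u_0\|_{L^2(\Omega)}\to 0$.

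To pass from $L^2$ to $L^\infty$, I would invoke interior and up-to-the-boundary parabolic (Schauder or De Giorgi--Nash--Moser) estimates: the uniform bound from (i) together with uniform parabolicity makes $\{u(\cdot,t):t\ge 1\}$ equicontinuous on $\bar\Omega$, hence precompact in $C(\bar\Omega)$, so any uniform limit along $t\to\infty$ must coincide with the $L^2$-limit $\bar u_0$, forcing the whole family to converge uniformly. This last upgrade is the delicate point: the energy method controls only a spatial average, and turning it into the stated $L^\infty$ statement requires the compactness furnished by parabolic regularity. As an alternative route to the same conclusion, one may run a LaSalle invariance argument with the Lyapunov functional $\int_\Omega\Psi(u)\,dx$, where $\Psi''=\sigma$, whose dissipation $\int_\Omega\sigma(u)u_x^2\,dx$ vanishes only at spatial constants; the $\omega$-limit set then consists of constant steady states, pinned to $\bar u_0$ by the mass conservation in (ii).
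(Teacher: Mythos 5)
Your proposal is correct and follows the same overall skeleton as the paper's proof: a maximum principle for (i), integration of the equation over $\Omega$ for (ii) (identical to the paper), and for (iii) the very same energy--Poincar\'e--Gronwall estimate giving exponential $L^2$ decay of $u-\bar u_0$, followed by an upgrade to $L^\infty$. The genuine differences lie in how the two endpoints are implemented. For (i), you freeze coefficients, view $u-M$ as a solution of a linear uniformly parabolic equation, and cite the standard maximum principle plus the Hopf boundary-point lemma; the paper instead gives a self-contained proof via the classical perturbation $v_\epsilon=u+\epsilon t$: at a minimum point of $v_\epsilon$ with $t_1>0$ (interior or lateral boundary, where the no-flux condition and $\sigma>0$ force $v_x=0$, hence $v_{xx}\ge 0$) one gets $v_t=\sigma(u)v_{xx}+\epsilon>0$, contradicting $v_t\le 0$. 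Your route is sound (the cylinder satisfies the interior-ball condition and the frozen coefficients are bounded on each $\bar\Omega_T$), but it leans on Hopf's lemma, which the paper's elementary argument avoids. For the $L^2\to L^\infty$ upgrade in (iii), the paper is a black-box citation of \cite[Theorem 1.1]{GS}, whereas you give an explicit mechanism: uniform-in-time H\"older estimates for the conormal problem (legitimate, since by (i) the ellipticity bounds on the range $[m_0,M_0]$ of $u$ do not deteriorate as $t\to\infty$), Arzel\`a--Ascoli precompactness of $\{u(\cdot,t)\}_{t\ge 1}$ in $C(\bar\Omega)$, and uniqueness of the limit forced by the $L^2$ decay; your LaSalle variant with $\Psi''=\sigma$ and mass conservation is the same mechanism in Lyapunov form. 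So your version is more self-contained than the paper's at that step, at the price of invoking up-to-the-boundary parabolic regularity, while the paper's is shorter by citation.
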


Although this lemma may be standard, we provide its proof for the readers' convenience.

\begin{proof}
(i) For simplicity, let us write $M_0:=\max_{\bar\Omega}u_0$ and $m_0:=\min_{\bar\Omega}u_0$. We have to show that
\[
m_0\le u\le M_0\;\;\mbox{on $\bar\Omega_\infty$}.
\]

We first verify that $u\ge m_0$ on $\bar\Omega_\infty$.
To do so, fix any $\epsilon>0,$ and define
\begin{equation}\label{lem:classical-0}
v(x,t)=v_\epsilon(x,t)=u(x,t)+\epsilon t\;\;\forall (x,t)\in\bar\Omega_\infty.
\end{equation}
We claim that
\begin{equation}\label{lem:classical-claim}
v(x,t)\ge m_0\;\;\forall (x,t)\in\bar\Omega_\infty.
\end{equation}
If this is the case, then for each $(x,t)\in\bar\Omega_\infty,$ letting $\epsilon\to 0^+$, it follows that
$
u(x,t)\ge m_0.
$

We now prove inequality (\ref{lem:classical-claim}) by contradiction. Suppose on the contrary that there exists a point $(x_0,t_0)\in\bar\Omega_\infty$ with
\[
v(x_0,t_0)<m_0.
\]
Fix a number $T\in (t_0,\infty).$ Then
\begin{equation}\label{lem:classical-1}
v(x_1,t_1)=\min_{\bar\Omega_T} v\le v(x_0,t_0)<m_0
\end{equation}
for some $(x_1,t_1)\in\bar\Omega_T$. If $t_1=0,$ then from (\ref{ib-P-pre}) and (\ref{lem:classical-0}),
\[
m_0\le u_0(x_1)=v(x_1,0)=v(x_1,t_1)<m_0,
\]
a contradiction. Thus, $0<t_1\le T$ so that
\begin{equation}\label{lem:classical-2}
v_t(x_1,t_1)\le 0.
\end{equation}
If $x_1\in\partial\Omega,$ then from (\ref{ib-P-pre}) and (\ref{lem:classical-0}), $v_x(x_1,t_1)=u_x(x_1,t_1)=0$ as $\sigma(u(x_1,t_1))>0;$ thus, $v_{xx}(x_1,t_1)\ge0$. If $x_1\in\Omega,$ then from (\ref{lem:classical-1}), we have $v_x(x_1,t_1)=0$  and $v_{xx}(x_1,t_1)\ge0$. In any case, from (\ref{ib-P-pre}), (\ref{assume-sigma-1}) and (\ref{lem:classical-0}), at the point $(x,t)=(x_1,t_1)$,
\[
v_t=u_t+\epsilon = \rho'(u)u_{xx}+\rho''(u)u_x^2+\epsilon= \sigma(u)v_{xx}+\epsilon\ge \epsilon>0,
\]
which is a contradiction to (\ref{lem:classical-2}). Therefore, (\ref{lem:classical-claim}) holds.

To show that $u\le M_0$ on $\bar\Omega_\infty$, for any fixed $\epsilon>0$, introduce
\[
w(x,t)=w_\epsilon(x,t)=u(x,t)-\epsilon t\;\;\forall (x,t)\in\bar\Omega_\infty.
\]
In a similar manner as above,  one can show that $w\le M_0$ on $\bar\Omega_\infty$. Then it follows immediately that $u\le M_0$ on $\bar\Omega_\infty$. We omit the details.

(ii) From (\ref{ib-P-pre}), we have
\[
\frac{d}{dt}\int_\Omega u(x,t)\,dx= \int_\Omega u_t(x,t)\,dx =\int_\Omega (\sigma(u)u_x)_{x}(x,t)\,dx=0
\]
for all $t>0.$ Thus, by continuity,
\[
\frac{1}{L}\int_\Omega u(x,t)\,dx= \bar{u}_0\;\;\mbox{for all $t>0$}.
\]

(iii) From (\ref{ib-P-pre}), the integration by parts, (i) and Poincar\'e's inequality, we have
\[
\begin{split}
\frac{d}{dt}\int_\Omega (u-\bar{u}_0)^2\,dx = &\,  2\int_\Omega (u-\bar{u}_0) (\sigma(u)u_x)_{x}\,dx =-2\int_\Omega \sigma(u)u_x^2\,dx \\
\le &\, -2 s_0\int_\Omega u_x^2\,dx \le -2 s_0 C \int_\Omega (u-\bar{u}_0)^2\,dx
\end{split}
\]
for all $t>0$, where $s_0:=\min_{[m_0,M_0]}\sigma>0$, and $C>0$ is a constant, depending only on $L$. Thus, from Gronwall's inequality, we have
\[
\|u(\cdot,t)-\bar{u}_0\|_{L^2(\Omega)}\le \|u_0-\bar{u}_0\|_{L^2(\Omega)} e^{-s_0 Ct}\to 0\;\;\mbox{as $t\to\infty.$}
\]
Appealing to \cite[Theorem 1.1]{GS}, we now conclude that
\[
\|u(\cdot,t)-\bar{u}_0\|_{L^\infty(\Omega)}\to 0\;\;\mbox{as $t\to\infty.$}
\]
\end{proof}

\section{Main results}\label{sec:mainresult}
We now study problem (\ref{ib-P-intro}); that is, the initial and no-flux boundary value problem in one space dimension:
\begin{equation}\label{ib-P}
\begin{cases}
u_{t} =(\sigma(u)u_x)_x=(\rho(u))_{xx} & \mbox{in $\Omega_\infty$,}\\
u =u_0 & \mbox{on $\Omega\times \{t=0\}$},\\
\sigma(u)u_x=0 & \mbox{on $\partial\Omega\times(0,\infty)$,}
\end{cases}
\end{equation}
where $u_0:\Omega\to [0,1]$ is a given initial population density, $u(x,t)\in[0,1]$ represents the population density at a space-time point $(x,t)\in\Omega_\infty$, the diffusivity $\sigma:\R\to\R$ is given by
\[
\sigma(s)=3\alpha\beta s^2- 4\alpha s +1\;\;(s\in\R)
\]
for some constants $\frac{2}{3}<\beta\le 1$ and $\frac{3}{4}\beta<\alpha<\frac{1}{4-3\beta}$, and
\[
\rho(s)=\alpha\beta s^3-2\alpha s^2+s \;\;(s\in\R).
\]

Letting
\[
s^\pm_{0}=\frac{2\alpha \pm \sqrt{4\alpha^2 -3\alpha\beta}}{3\alpha\beta},
\]
we observe that $0<s^-_0<s^+_0<1$ and that
\begin{equation}\label{sigma-1}
\sigma(s) \begin{cases}
                    >0 & \mbox{for $s\in [0,s^-_0)\cup(s^+_0,1]$,} \\
                    <0 & \mbox{for $s^-_0<s<s^+_0$,} \\
                    =0 & \mbox{for $s=s^\pm_0$.}
                  \end{cases}
\end{equation}
It is also easily checked that
\begin{equation}\label{ineq-rho}
\rho(s)>0\;\;\forall s\in(0,1].
\end{equation}

Let
\[
r^*=\min\{\rho(s^-_0),\rho(1)\};
\]
then from (\ref{sigma-1}) and (\ref{ineq-rho}), we have
\[
r^*>\rho(s^+_0)>0.
\]
For each $r\in[\rho(s^+_0),r^*],$ let $s^+(r)\in[s^+_0,1]$ and $s^-(r)\in(0,s^-_0]$ denote the unique numbers with
\[
\rho(s^\pm(r))=r.
\]
Let us write
\[
s^\pm_1:=s^\pm(\rho(s^+_0))\;\;\mbox{and}\;\; s^\pm_2:=s^\pm(r^*);
\]
then
\[
0<s^-_1<s^-_2\le s^-_0<s^+_0 =s^+_1<s^+_2\le 1,
\]
and
\[
s^-_2= s^-_0\;\;\mbox{or}\;\; s^+_2= 1.
\]
Observe also that
\[
\begin{cases}
  \rho(s^-_0)\le\rho(1) & \mbox{if $\frac{3}{4}\beta<\alpha\le-\frac{4\beta}{3\beta^2-4\beta-4}$,} \\
  \rho(s^-_0)>\rho(1) & \mbox{if $-\frac{4\beta}{3\beta^2-4\beta-4}<\alpha<\frac{1}{4-3\beta}$.}
\end{cases}
\]
We say that  problem (\ref{ib-P}) is of \emph{type (I)} and \emph{type (II)} if $\rho(s^-_0)\le\rho(1)$ and $\rho(s^-_0)>\rho(1)$, respectively (see Figure \ref{fig2}).

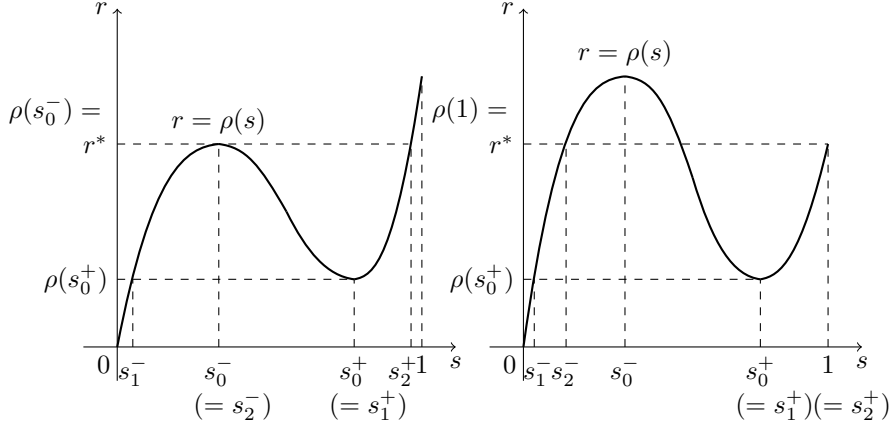
\begin{figure}[ht]
\begin{center}
\begin{tikzpicture}[scale =0.9]
    \draw[->] (-0.5,0) -- (5,0);
    \draw[->] (5.5,0) -- (11,0);
    \draw[->] (0,-0.5) -- (0,5);
    \draw[->] (6,-0.5) -- (6,5);
 \draw[dashed] (6,1)--(9.5,1);
 \draw[dashed] (6,3)--(10.5,3);
 \draw[dashed] (7.5,0)--(7.5,4);
 \draw[dashed] (9.5,0)--(9.5,1);
 \draw[dashed] (10.5,0)--(10.5,3);
 \draw[dashed] (6.16,0)--(6.16,1);
 \draw[dashed] (6.63,0)--(6.63,3);
 \draw (7.5, 0) node[below] {$s^-_0$};
 \draw (9.5, 0) node[below] {$s^+_0$};
 \draw (9.71, -0.5) node[below] {$(=s^+_1)$};
 \draw (10.85, -0.5) node[below] {$(=s^+_2)$};
 \draw (6, 3) node[left] {$r^*$};
 \draw (5.9, 3.5) node[left] {$\rho(1)=$};
 \draw (6, 1) node[left] {$\rho(s^+_0)$};
 \draw (6.22, 0) node[below] {$s^-_1$};
 \draw (6.63, 0) node[below] {$s^-_2$};
 \draw[dashed] (0,1)--(3.5,1);
 \draw[dashed] (0,3)--(4.3,3);
 \draw[dashed] (1.5,0)--(1.5,3);
 \draw[dashed] (3.5,0)--(3.5,1);
 \draw[dashed] (4.5,0)--(4.5,4);
 \draw[dashed] (0.23,0)--(0.23,1);
 \draw[dashed] (4.34,0)--(4.34,3);
 \draw (1.5, 0) node[below] {$s^-_0$};
 \draw (3.5, 0) node[below] {$s^+_0$};
 \draw (3.71, -0.5) node[below] {$(=s^+_1)$};
 \draw (1.71, -0.5) node[below] {$(=s^-_2)$};
 \draw (0, 3) node[left] {$r^*$};
 \draw (-0.1, 3.5) node[left] {$\rho(s^-_0)=$};
 \draw (0, 1) node[left] {$\rho(s^+_0)$};
 \draw (0.23, 0) node[below] {$s^-_1$};
	\draw[thick]   (6, 0) .. controls (6.5,3.6) and  (7,3.95)   ..(7.5,4);
    \draw[thick]   (7.5,4) .. controls (7.9,3.95) and  (8.1,3.7)   ..(8.5,2.5);
    \draw[thick]   (8.5,2.5) .. controls (8.8,1.5) and  (9.1,1.05)   ..(9.5,1);
	\draw[thick]   (9.5,1) .. controls  (9.8,1.05) and (10.1,1.3) ..(10.5,3);
    \draw[thick]   (0, 0) .. controls (0.5,2.6) and  (1,2.95)   ..(1.5,3);
    \draw[thick]   (1.5,3) .. controls (1.9,2.95) and  (2.1,2.7)   ..(2.5,2);
    \draw[thick]   (2.5,2) .. controls (2.8,1.4) and  (3.1,1.05)   ..(3.5,1);
	\draw[thick]   (3.5,1) .. controls  (3.8,1.05) and (4.1,1.3) ..(4.5,4);
	\draw (11,0) node[below] {$s$};
    \draw (5,0) node[below] {$s$};
    \draw (7.5, 4) node[above] {$r=\rho(s)$};
    \draw (1.5, 3) node[above] {$r=\rho(s)$};
   \draw (4.5, 0) node[below] {$1$};
   \draw (10.5, 0) node[below] {$1$};
   \draw (4.22, 0) node[below] {$s^+_2$};
   \draw (0, 5) node[left] {$r$};
   \draw (6, 5) node[left] {$r$};
   \draw (-0.2, 0) node[below] {$0$};
   \draw (5.8, 0) node[below] {$0$};
    \end{tikzpicture}
\end{center}
\caption{Graphs of type (I) and type (II) of  $r=\rho(s)$ $(0\le s\le 1)$.}
\label{fig2}
\end{figure}



\subsection{Initial population density}
We assume that the initial population density $u_0$ to problem (\ref{ib-P}) fulfills the regularity condition:
\[
u_0\in C^{2+a}(\bar\Omega;[0,1])\;\;\mbox{for some $0<a<1$}
\]
and the compatibility condition:
\[
u_0'(0)=u_0'(L)=0.
\]
Let us write
\[
M_0:=\max_{\bar{\Omega}}u_0,\;\;m_0:=\min_{\bar{\Omega}}u_0,\;\;\mbox{and}\;\; \bar{u}_0:=\frac{1}{L}\int_\Omega u_0(x)\,dx;
\]
then $0\le m_0\le \bar{u}_0 \le M_0\le 1.$ So we have one and only one of the following four cases.
\begin{itemize}
\item[(i)] $M_0<s^-_0$ or $m_0>s^+_0$;
\item[(ii)] $\bar{u}_0<s^-_0\le M_0$;
\item[(iii)] $m_0\le s^+_0<\bar{u}_0$;
\item[(iv)] $s^-_0\le\bar{u}_0\le s^+_0$.
\end{itemize}
We also divide case (ii) into the following two subcases.
\begin{itemize}
\item[(ii-1)] $\bar{u}_0<s^-_2\le s^-_0\le M_0$;
\item[(ii-2)] problem (\ref{ib-P}) is of type (II), and $s^-_2\le\bar{u}_0< s^-_0\le M_0$.
\end{itemize}

\subsection{Main results}
We now present our main results below.
All the results in this subsection will be proved in section \ref{sec:proof_main}.

The first result, which pertains to case (i), is not a part of Theorem \ref{thm:main-simplified} and is based solely on classical parabolic theory.

\begin{thm}[Case (i): smooth stabilization]\label{thm:(i)} Assume that
\[
\mbox{\emph{(i)} $M_0<s^-_0$ or $m_0>s^+_0$.}
\]
Then there exists a global classical solution $u\in C^{2,1}(\bar\Omega_\infty;[0,1])$ to problem \eqref{ib-P} with
\[
u\in C^{2+a,1+\frac{a}{2}}(\bar\Omega_T;[0,1])\;\;\forall T>0
\]
satisfying the following:
\begin{itemize}
\item[(a)] \underline{\emph{Maximum principle:}}
\[
0\le\min_{\bar\Omega}u(\cdot,s)\le u(x,t)\le\max_{\bar\Omega}u(\cdot,s)\le 1\;\;\forall x\in\bar\Omega,\,\forall t\ge s\ge0;
\]
\item[(b)] \underline{\emph{Conservation of total population:}}
\[
\int_\Omega u(x,t)\,dx=L \bar{u}_0\;\;\mbox{for all $t\ge0$};
\]
\item[(c)] \underline{\emph{Uniform stabilization:}}
\[
\|u(\cdot,t)-\bar{u}_0\|_{L^\infty(\Omega)}\to 0\;\;\mbox{as $t\to\infty$}.
\]
\end{itemize}
\end{thm}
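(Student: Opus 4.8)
The plan is to use that hypothesis (i) confines the range of $u_0$ to a forward-parabolic regime, reduce \eqref{ib-P} to a uniformly parabolic problem by a harmless modification of the nonlinearity, solve that by classical theory, and then recover a genuine solution of the original problem through the maximum principle.

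First I would record that, since $u_0\in C^{2+a}(\bar\Omega;[0,1])$ is continuous, its range $u_0(\bar\Omega)=[m_0,M_0]$ is a closed interval. Under (i) this interval lies in $[0,s^-_0)$ (when $M_0<s^-_0$) or in $(s^+_0,1]$ (when $m_0>s^+_0$), and on either of these $\sigma>0$ by \eqref{sigma-1}; in particular $s_0:=\min_{[m_0,M_0]}\sigma>0$. I would then extend $\rho$ to a function $\tilde\rho\in C^{\infty}(\R)$ with $\tilde\rho=\rho$ on $[m_0,M_0]$ and $\tilde\sigma:=\tilde\rho'\ge c_0>0$ on all of $\R$ for some constant $c_0$; this is possible because $\sigma\ge s_0>0$ already holds at the endpoints $m_0,M_0$. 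The modification turns the flux into a uniformly elliptic one while leaving it untouched on the range of $u_0$.

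Next I would solve the modified problem, namely \eqref{ib-P} with $\rho,\sigma$ replaced by $\tilde\rho,\tilde\sigma$. Because $\tilde\sigma>0$ everywhere, the conormal condition $\tilde\sigma(u)u_x=0$ reduces to the homogeneous Neumann condition $u_x=0$, and the compatibility condition $u_0'(0)=u_0'(L)=0$ holds by assumption. The modified equation is uniformly parabolic with smooth coefficients, so classical quasilinear parabolic theory (cf.\ \cite{Ln}) yields a unique local classical solution; the $L^\infty$ bound furnished by the maximum-principle argument of Lemma \ref{lem:classical}(i) (which applies verbatim since $\tilde\rho\in C^2(\R)$ and $\tilde\sigma>0$) together with interior and boundary Schauder estimates provides the a priori control needed to continue it to a global solution $\tilde u\in C^{2+a,1+\frac a2}(\bar\Omega_T;\R)$ for every $T>0$.

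Finally I would invoke Lemma \ref{lem:classical}, applied to $\tilde u$ with the pair $(\tilde\rho,\tilde\sigma)$, to obtain $m_0\le\tilde u\le M_0$ on $\bar\Omega_\infty$. On this range $\tilde\rho\equiv\rho$ and $\tilde\sigma\equiv\sigma$, so $u:=\tilde u$ is in fact a global classical solution of the original problem \eqref{ib-P} with the asserted regularity and values in $[0,1]$. Conservation (b) and uniform stabilization (c) are then exactly parts (ii) and (iii) of Lemma \ref{lem:classical}; for the maximum principle (a) I would rerun the same confinement argument on each slab $\Omega\times(s,\infty)$ with initial datum $u(\cdot,s)$, whose range still lies in the forward regime, using the time-translation invariance of the equation. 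The only genuine work is the classical one, namely global continuation from a priori estimates and Schauder regularity up to the parabolic boundary including the corner compatibility at $\{0,L\}\times\{0\}$; this is the main obstacle in principle but is entirely standard once uniform parabolicity and the $L^\infty$ bound are in hand.
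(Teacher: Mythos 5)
Your proposal is correct and follows essentially the same strategy as the paper's proof: replace $\rho$ by a uniformly parabolic modification agreeing with $\rho$ on an interval containing the solution's range, solve the modified conormal problem globally via classical quasilinear theory (the paper cites \cite[Theorem 13.24]{Ln}), and then use the confinement $m_0\le u\le M_0$ from Lemma \ref{lem:classical} to conclude that the modified solution solves the original problem and inherits properties (a)--(c). The only cosmetic differences are that the paper treats the two cases $M_0<s^-_0$ and $m_0>s^+_0$ separately, with the modified flux agreeing with $\rho$ on a slightly larger interval ($[-1,M_1]$ or $[m_1,2]$), whereas you handle both at once with agreement exactly on $[m_0,M_0]$.
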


The result indicates that if the initial population density distribution $u_0(\bar\Omega)$ does not overlap with the backward regime $[s^-_0,s^+_0]$, then the smooth global solution $u$ is distributed within the forward regime $[0,s^-_0)\cup(s^+_0,1]$ for all times and stabilizes to the initial average density $\bar u_0$ as time approaches infinity.

The second result, concerning  case (ii-1), is as follows.


\begin{thm}[Case (ii-1): finite extinction of density mixtures]\label{thm:(ii)-1}
Assume that
\[
\mbox{\emph{(ii-1)} $\bar{u}_0<s^-_2\le s^-_0\le M_0$}.
\]
Let
\[
\max\{\rho(\bar{u}_0),\rho(s_1^-)\}<r_1<r_2\le r^*=\rho(s^-_2).
\]
Then there exist a function $u^\star\in C^{2,1}(\bar\Omega_\infty;[0,1])$ with
\[
u^\star\in C^{2+a,1+\frac{a}{2}}(\bar\Omega_T;[0,1])\;\;\forall T>0,
\]
a nonempty bounded open set $Q\subset\Omega_\infty$ with $\bar{Q}\cap (\Omega\times\{0\})\ne\emptyset$,
and infinitely many global weak solutions $u\in L^{\infty}(\Omega_\infty;[0,1])$ to problem \eqref{ib-P} satisfying the following:
\begin{itemize}
\item[(a)] \underline{\emph{Smoothing in finite time:}}
\[
u=u^\star\;\;\mbox{in $\Omega_\infty\setminus \bar{Q};$}
\]
\item[(b)] \underline{\emph{Density mixtures:}}
\[
u\in[s^-(r_1),s^-(r_2)]\cup[s^+(r_1),s^+(r_2)]\;\;\mbox{a.e. in $Q;$}
\]
\item[(c)] \underline{\emph{Fine-scale oscillations:}} for any nonempty open set $O\subset Q,$
\[
\underset{O}{\mathrm{ess\,osc}}\, u:=\underset{O}{\mathrm{ess\,sup}}\, u-\underset{O}{\mathrm{ess\,inf}}\, u \ge s^+(r_1)-s^-(r_2)>0;
\]
\item[(d)] \underline{\emph{Conservation of total population:}}
\[
\int_\Omega u(x,t)\,dx= \int_\Omega u^\star(x,t)\,dx=L\bar{u}_0\;\;\forall t\ge 0;
\]
\item[(e)] \underline{\emph{Maximum principle:}}
\[
0\le\min_{\bar\Omega}u^\star(\cdot,s)\le u^\star(x,t)\le\max_{\bar\Omega}u^\star(\cdot,s)\le 1\;\;\forall x\in\bar\Omega,\,\forall t\ge s\ge0;
\]
\item[(f)] \underline{\emph{Uniform stabilization:}}
\[
\|u^\star(\cdot,t)-\bar{u}_0\|_{L^\infty(\Omega)}\to 0\;\;\mbox{as $t\to\infty$.}
\]
\end{itemize}
\end{thm}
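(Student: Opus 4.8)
The plan is to realize the oscillating weak solutions as solutions of the differential inclusion formulated in Section \ref{sec:generic}, whose smooth subsolution is a classical solution of a \emph{monotonized} version of \eqref{ib-P}, and then to invoke the solvability result Theorem \ref{thm:two-wall}. First I would fix $r_1,r_2$ as in the hypothesis and construct a monotone modification $\rho_*\in C^2(\R)$ of the flux with $\sigma_*:=\rho_*'>0$ on $\R$, agreeing with $\rho$ on $(-\infty,s^-(r_1)]\cup[s^+(r_2),\infty)$ and climbing strictly monotonically from the value $r_1$ at $s^-(r_1)$ to the value $r_2$ at $s^+(r_2)$ across the intermediate strip; this is possible precisely because $r_1<r_2$ and $\rho$ has positive slope at both matching points $s^-(r_1)$ and $s^+(r_2)$. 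Since $\rho_*$ is smooth and uniformly parabolic and $u_0\in C^{2+a}(\bar\Omega;[0,1])$ satisfies $u_0'(0)=u_0'(L)=0$, classical quasilinear parabolic theory produces a global classical solution $u^\star$ of $u^\star_t=(\rho_*(u^\star))_{xx}$ with the no-flux boundary condition and datum $u_0$, with $u^\star\in C^{2+a,1+\frac a2}(\bar\Omega_T)$ for all $T>0$. Setting $v^\star:=\rho_*(u^\star)$, Lemma \ref{lem:classical} applied to the monotone flux $\rho_*$ immediately supplies the maximum principle (e), the mass identity $\int_\Omega u^\star\,dx=L\bar u_0$, and the uniform stabilization $u^\star\to\bar u_0$ of (f).

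Next I would define the oscillation region $Q:=\{(x,t)\in\Omega_\infty:\ r_1<v^\star(x,t)<r_2\}$, which is open. It is bounded in time because $v^\star\to\rho_*(\bar u_0)=\rho(\bar u_0)<r_1$ uniformly as $t\to\infty$ by (f), using $\rho(\bar u_0)<r_1$; and $\bar Q$ meets $\Omega\times\{0\}$ because $\min_{\bar\Omega}v^\star(\cdot,0)=\rho_*(m_0)\le\rho(\bar u_0)<r_1$ while $\max_{\bar\Omega}v^\star(\cdot,0)=\rho_*(M_0)>\rho_*(s^-(r_1))=r_1$, using $M_0\ge s^-_0>s^-(r_1)$, so by the intermediate value theorem $\{x:\ r_1<v^\star(x,0)<r_2\}$ is nonempty. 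On $Q$ one has $v^\star\in(r_1,r_2)$ and $u^\star=\rho_*^{-1}(v^\star)$; comparing $\rho_*$ with the true branches gives $\rho_*(s^-(v^\star))<v^\star<\rho_*(s^+(v^\star))$, whence by monotonicity $s^-(v^\star)<u^\star<s^+(v^\star)$ pointwise on $Q$. Thus $u^\star$ lies strictly between the two forward branches, i.e. strictly inside the relaxed region determined by the two walls $s^\mp(v^\star)$, while $u^\star$ sits exactly on a wall along $\partial Q$ and $\rho(u^\star)=\rho_*(u^\star)=v^\star$ on $\Omega_\infty\setminus Q$. This is exactly the configuration that initializes Theorem \ref{thm:two-wall}, with prescribed smooth diffusion potential $v^\star$ and subsolution assembled from $(u^\star,v^\star)$.

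I would then apply Theorem \ref{thm:two-wall} on $Q$ to produce infinitely many solutions of the inclusion that coincide with the subsolution on $\Omega_\infty\setminus Q$, satisfy $\rho(u)=v^\star$ a.e., take values only on the two forward branches, and oscillate on every open subset of $Q$. Translating back yields $u\in L^\infty(\Omega_\infty;[0,1])$ with $u=u^\star$ on $\Omega_\infty\setminus\bar Q$, giving (a); with $u\in[s^-(r_1),s^-(r_2)]\cup[s^+(r_1),s^+(r_2)]$ a.e. on $Q$, giving (b), since $v^\star\in(r_1,r_2)$ there; and with the fine-scale bound (c), because on any open $O\subset Q$ both branches are attained on sets of positive measure and $s^+(r_1)-s^-(r_2)>0$. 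To verify that $u$ is a global weak solution in the sense of Definition \ref{def:global-weak-sol}, I would use that $\rho(u)=v^\star$ everywhere and that $u^\star$ solves the monotone problem with datum $u_0$: subtracting the two weak identities reduces the claim to $\int_0^T\!\int_0^L(u-u^\star)\varphi_t\,dx\,dt=0$, which holds since $\partial_t u=v^\star_{xx}=\partial_t u^\star$ in $\mathcal D'(Q)$ together with $u=u^\star$ across $\partial Q$. The no-flux condition is automatic because $\rho(u)=v^\star$ with $v^\star_x=0$ on $\partial\Omega$, and testing with functions independent of $x$ shows $t\mapsto\int_\Omega u\,dx$ is constant and equal to $L\bar u_0$, giving (d); properties (e) and (f) are the already-established facts about $u^\star$.

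The main obstacle is not the oscillation mechanism itself, which Theorem \ref{thm:two-wall} delivers, but the verification that $(u^\star,v^\star)$ genuinely satisfies the hypotheses of that theorem: that $u^\star$ is a strict subsolution staying a definite distance inside the two walls on $Q$, that the prescribed potential $v^\star$ is admissible, and that the trace on $\partial Q$, where $u^\star$ touches a wall, is compatible with the relaxation so that the convex-integration corrections localize in $Q$. Guaranteeing that $\rho_*$ can be built $C^2$, uniformly increasing, and with the stated endpoint values, and that the resulting $Q$ is a bounded open set on which the strict interior inequality $s^-(v^\star)<u^\star<s^+(v^\star)$ persists up to $\partial Q$, is the delicate analytic part; once this geometric setup is secured, conservation of mass, the weak initial condition (despite $\bar Q$ meeting $t=0$), and the no-flux condition all follow from the single identity $\rho(u)=v^\star$ combined with Lemma \ref{lem:classical} for the monotone problem.
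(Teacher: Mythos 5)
Your overall architecture is the same as the paper's: you monotonize the flux (your $\rho_*$ is the paper's $\rho^\star$, and the inequalities $\rho_*<\rho$ on $(s^-(r_1),s^-(r_2)]$, $\rho_*>\rho$ on $[s^+(r_1),s^+(r_2))$ — which, note, are \emph{not} automatic from monotonicity and endpoint matching and must be imposed in the construction — are exactly what make the subsolution strict), you solve the uniformly parabolic modified problem to get $u^\star$ and properties (e), (f) via Lemma \ref{lem:classical}, you take the same set $Q$ (your $\{r_1<\rho_*(u^\star)<r_2\}$ equals the paper's $\{s^-(r_1)<u^\star<s^+(r_2)\}$), and you invoke Theorem \ref{thm:two-wall}. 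The genuine gap is in the translation between inclusion solutions and weak solutions of \eqref{ib-P}. Theorem \ref{thm:two-wall} takes as input a strict subsolution $z^\star=(v^\star,w^\star)$ built from \emph{primitives} of $u^\star$ — in the paper, $v^\star(x,t)=\int_0^x u^\star(y,t)\,dy+\int_0^t u^\star_x(0,s)\,ds$ and $w^\star(x,t)=\int_0^t\rho^\star(v^\star_x(x,s))\,ds+\int_0^x v_0(y)\,dy$, so that $w^\star_x=v^\star$ and $w^\star_t=\rho^\star(v^\star_x)$ — not the pair $(u^\star,\rho_*(u^\star))$ you propose. More seriously, the output $z=(v,w)$ of Theorem \ref{thm:two-wall} satisfies $u=v_x$ and $\rho(u)=w_t$ a.e. in $Q$, where $w_t$ is itself an oscillating $L^\infty$ function that is only $L^\infty$-close to, and \emph{not equal to}, the smooth flux $w^\star_t=\rho_*(u^\star)$. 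Your claim that the solutions satisfy $\rho(u)=v^\star\,(=\rho_*(u^\star))$ a.e. is therefore false, and everything you hang on it collapses: the reduction of the weak formulation to $\int_0^T\!\!\int_0^L(u-u^\star)\varphi_t\,dx\,dt=0$, the assertion $\partial_t u=v^\star_{xx}=\partial_t u^\star$ in $\mathcal{D}'(Q)$ (the oscillating $u$ and the smooth $u^\star$ have genuinely different distributional time derivatives in $Q$), the "automatic" no-flux condition, and the mass conservation argument.

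The correct verification, as in the paper, never identifies $\rho(u)$ with a smooth function. One writes $\int_0^T\!\!\int_0^L(u\varphi_t+\rho(u)\varphi_{xx})\,dx\,dt=\int_0^T\!\!\int_0^L(v_x\varphi_t+w_t\varphi_{xx})\,dx\,dt$ and integrates by parts twice, using $w_x=v$ to cancel the bulk terms exactly (this is the whole point of carrying the second component $w$: the identity $u_t=(\rho(u))_{xx}$ in $\mathcal{D}'$ is encoded as $v_x=w_{xx}$, $\rho(v_x)=w_t$), and then uses $z=z^\star$ on $\bar\Omega_0^{T_1}\setminus Q$ together with the explicit formulas for $v^\star,w^\star$ and the test-function conditions $\varphi(\cdot,T)=0$, $\varphi_x=0$ on $\partial\Omega$ to reduce the remaining boundary terms to $-\int_0^L u_0\varphi(\cdot,0)\,dx$. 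Conservation of mass likewise follows pointwise in $t$ from $\int_\Omega u\,dx=v(L,t)-v(0,t)=v^\star(L,t)-v^\star(0,t)$, not from testing an identity you have not yet established. Finally, two smaller points: you should verify hypothesis \eqref{subsolution-2} of Theorem \ref{thm:two-wall} (the subsolution's image approaching $\partial U$), and the infinitude of solutions does not come from Theorem \ref{thm:two-wall} producing "infinitely many" per se — it produces one solution per $\epsilon$ — but from property (iv) there: if only finitely many arose, some fixed $z$ would satisfy $\|z-z^\star\|_{L^\infty}<\epsilon$ for arbitrarily small $\epsilon$, forcing $u=u^\star$, contradicting the oscillation property (c).
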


This result shows that if the initial maximum density $M_0$ is at least $s^-_0$ but the initial average density $\bar u_0$ is less than $s^-_2(\le s^-_0),$ then fine-scale density mixtures will appear immediately over a finite time interval. However, the weak solutions will eventually become smooth and stabilize to $\bar u_0$ after a certain period.

The third result for case (ii-2) addresses problem (\ref{ib-P}) of type (II) where the initial maximum density $M_0$ is at least $s^-_0$, but the initial average density $\bar u_0$ falls between $s^-_2$ (inclusive) and $s^-_0$ with $s^-_2$.


\begin{thm}[Case (ii-2): anomalous patching in type (II) for $\bar{u}_0\in[s^-_2,s^-_0)$]\label{thm:(ii)-2}
Assume that problem \eqref{ib-P} is of type (II) and that
\[
s^-_2\le\bar{u}_0<s^-_0\le M_0.
\]
Let
\[
\rho(s^-_1)\le r_1<r_2\le r^*=\rho(1).
\]
Then there exist a function $u^\star_1\in C^{2+a,1+\frac{a}{2}}(\bar\Omega_{T_1};[0,1])$ for some $T_1>0$, a function $u^\star_2\in C^{2,1}(\bar\Omega\times[T_1,\infty);[0,1])$ with
\[
u^\star_2\in C^{2+a,1+\frac{a}{2}}(\bar\Omega\times[T_1,T];[0,1])\;\;\forall T>T_1 \;\;\mbox{and}\;\; u_2^\star=u^\star_1\;\;\mbox{on $\bar{\Omega}\times\{t=T_1\}$},
\]
a nonempty bounded open set $Q\subset\Omega_{T_1}$ with $\bar{Q}\cap (\Omega\times\{0\})\ne\emptyset$,
and infinitely many global weak solutions $u\in L^{\infty}(\Omega_\infty;[0,1])$ to problem \eqref{ib-P} satisfying the following:
\begin{itemize}
\item[(a)] \underline{\emph{Smoothing in finite time:}}
\[
u=u^\star_1\;\;\mbox{in $\Omega_{T_1}\setminus\bar{Q}$}\;\;\mbox{and}\;\; u=u^\star_2\;\;\mbox{on $\bar\Omega\times[T_1,\infty);$}
\]
\item[(b)] \underline{\emph{Density mixtures:}}
\[
u\in[s^-(r_1),s^-(r_2)]\cup[s^+(r_1),s^+(r_2)]\;\;\mbox{a.e. in $Q;$}
\]
\item[(c)] \underline{\emph{Fine-scale oscillations:}} for any nonempty open set $O\subset Q,$
\[
\underset{O}{\mathrm{ess\,osc}}\, u \ge s^+(r_1)-s^-(r_2)>0;
\]
\item[(d)] \underline{\emph{Conservation of total population:}}
\[
\int_\Omega u(x,t)\,dx= \int_\Omega (\chi_{[0,T_1)}(t) u^\star_1(x,t)+\chi_{[T_1,\infty)}(t) u^\star_2(x,t))\,dx=L\bar{u}_0\;\;\forall t\ge 0;
\]
\item[(e)] \underline{\emph{Maximum principle:}}
\[
m_0\le\min_{\bar\Omega}u^\star_2(\cdot,s)\le u^\star_2(x,t)\le\max_{\bar\Omega}u^\star_2(\cdot,s)\le M_0\;\;\forall x\in\bar\Omega,\,\forall t\ge s\ge T_1;
\]
\item[(f)] \underline{\emph{Uniform stabilization:}}
\[
\|u^\star_2(\cdot,t)-\bar{u}_0\|_{L^\infty(\Omega)}\to 0\;\;\mbox{as $t\to\infty$.}
\]
\end{itemize}
\end{thm}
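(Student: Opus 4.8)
The plan is to run the scheme already prepared for Theorem \ref{thm:(ii)-1}, reducing the existence of wild solutions to the solvability of the differential inclusion of Theorem \ref{thm:two-wall}, but now with the smooth background \emph{patched from two pieces} $u^\star_1$ and $u^\star_2$ across a finite time $T_1$. The reason the patching is forced is the type-(II) geometry: here $r^\ast=\rho(1)$, and since $\bar u_0\in[s_2^-,s_0^-)$ one has $\rho(\bar u_0)\ge\rho(1)=r^\ast\ge r_2$, so the level of the target average sits at or above the upper wall. Consequently a single globally defined smooth subsolution that simultaneously relaxes to $\bar u_0$ and keeps its associated gradient inside the two-wall relaxation cannot be produced; one must confine the mixing to a finite slab and then hand the evolution over to a genuine forward-parabolic solution.

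First I would build the background by solving a \emph{regularized} no-flux problem. Replace $\rho$ by a strictly increasing $\tilde\rho\in C^2(\R)$ that agrees with $\rho$ on the forward parts lying outside the mixing band (in particular near the low and high branch ranges actually used by the two walls $r_1<r_2\le r^\ast$), and solve
\[
\hat u_t=(\tilde\rho(\hat u))_{xx}\;\;\text{in }\Omega_\infty,\qquad \hat u(\cdot,0)=u_0,\qquad \tilde\rho'(\hat u)\hat u_x=0\;\;\text{on }\partial\Omega\times(0,\infty).
\]
Since $\tilde\rho'>0$ this is uniformly parabolic, so by the classical theory of Section \ref{sec:preliminaries} and \cite{Ln} it has a global solution $\hat u$ with $\hat u\in C^{2+a,1+\frac a2}(\bar\Omega_T)$ for every $T$; by Lemma \ref{lem:classical} it obeys the maximum principle, conserves mass, and satisfies $\|\hat u(\cdot,t)-\bar u_0\|_{L^\infty(\Omega)}\to0$. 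Because $\bar u_0<s_0^-$, uniform stabilization furnishes a finite $T_1>0$ with $\max_{\bar\Omega}\hat u(\cdot,t)<s_0^-$ for all $t\ge T_1$; on that range $\tilde\rho(\hat u)=\rho(\hat u)$, so $\hat u$ is an honest classical solution of \eqref{ib-P}. I then set $u^\star_1:=\hat u|_{\bar\Omega_{T_1}}$ and $u^\star_2:=\hat u|_{\bar\Omega\times[T_1,\infty)}$, which yields the matching $u^\star_2=u^\star_1$ at $t=T_1$ together with (e) and (f) directly from Lemma \ref{lem:classical}.

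Next I would fix the mixing region and invoke convex integration via the inclusion reformulation of Section \ref{sec:generic}. Choose an open $Q\subset\Omega_{T_1}$, compactly contained away from the lateral boundary but with $\bar Q\cap(\Omega\times\{0\})\neq\emptyset$ (nonempty because $M_0\ge s_0^-$ forces $u_0$, hence $\hat u$ for small $t$, into the backward band), and arranged so that on $Q$ the gradient of the potential associated with $u^\star_1$ lies \emph{strictly inside} the two-wall relaxation determined by $r_1<r_2\le r^\ast$, while on $\Omega_{T_1}\setminus\bar Q$ one has $\tilde\rho(u^\star_1)=\rho(u^\star_1)$, so $u^\star_1$ already solves \eqref{ib-P} classically there. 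Applying Theorem \ref{thm:two-wall} on $Q$ with boundary datum induced by $u^\star_1$ produces infinitely many inclusion solutions agreeing with $u^\star_1$ on $\partial Q$, taking values a.e.\ in $[s^-(r_1),s^-(r_2)]\cup[s^+(r_1),s^+(r_2)]$ and exhibiting the essential-oscillation bound of (c); extending each by $u^\star_1$ off $Q$ and by $u^\star_2$ on $[T_1,\infty)$ gives $u\in L^\infty(\Omega_\infty;[0,1])$ with (a), (b), (c). That each $u$ meets Definition \ref{def:global-weak-sol} follows by verifying \eqref{eq:weaksol}: on $\Omega_{T_1}$ it is the inclusion solution solving the equation, and across $t=T_1$ the two pieces share the trace $u^\star_1(\cdot,T_1)=u^\star_2(\cdot,T_1)$ and matching flux, so a standard gluing concatenates them. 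Mass conservation (d) then holds because convex integration preserves the local averages of $u^\star_1$, and each piece conserves mass by Lemma \ref{lem:classical}(ii).

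The hard part will be the joint construction of $\tilde\rho$, $u^\star_1$ and $Q$ so that \emph{all} constraints hold at once: strict interiority of the $u^\star_1$-gradient in the two-wall relaxation precisely on $Q$, exact agreement $\tilde\rho=\rho$ off $Q$, the terminal profile landing in the forward regime below $s_0^-$ at the finite time $T_1$, and the up-to-boundary Schauder regularity $u^\star_1\in C^{2+a,1+\frac a2}(\bar\Omega_{T_1})$ under the compatibility $u_0'(0)=u_0'(L)=0$. The type-(II) feature $s^+_2=1$, namely the upper phase of the mixture touching the maximal density, makes the mass budget tight and is exactly what obstructs a one-stage construction; reconciling it with conservation of the prescribed average $\bar u_0\in[s_2^-,s_0^-)$ while keeping $r_2\le r^\ast=\rho(1)$ is the crux of the argument.
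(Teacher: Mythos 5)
Your construction has a genuine gap at its central step. You use a \emph{single} regularized flux $\tilde\rho$ and a \emph{single} solution $\hat u$, and define both $u^\star_1$ and $u^\star_2$ as restrictions of $\hat u$, relying on the claim that once $\max_{\bar\Omega}\hat u(\cdot,t)<s^-_0$ for $t\ge T_1$ one has $\tilde\rho(\hat u)=\rho(\hat u)$, so that $\hat u$ is ``an honest classical solution'' of \eqref{ib-P} after $T_1$. This is false in case (ii-2): since $\bar u_0\ge s^-_2=s^-(r^*)\ge s^-(r_2)>s^-(r_1)$, the values of $\hat u(\cdot,t)$ for large $t$ cluster around $\bar u_0$, which lies \emph{inside} the band $(s^-(r_1),s^+(r_2))$ on which $\tilde\rho$ was modified; being below $s^-_0$ does not place these values in the region where $\tilde\rho=\rho$. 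Moreover, you cannot repair this by choosing $\tilde\rho=\rho$ near $\bar u_0$: the strict subsolution property on $Q$ requires $\tilde\rho<\rho$ on the low-branch stretch $(s^-(r_1),s^-(r_2)]$ and $w^\star_t=\tilde\rho(\hat u)<r_2$ on $Q$, while monotonicity of $\tilde\rho$ would force $\tilde\rho(s)\ge\rho(\bar u_0)\ge r^*\ge r_2$ for all $s\ge\bar u_0$, so no point where $\hat u\ge\bar u_0$ could lie in $Q$; yet $u_0$ attains values up to $M_0\ge s^-_0>\bar u_0$ (in particular values where $\tilde\rho$ must deviate from $\rho$, e.g.\ approaching the backward regime), and at such points $\hat u$ would neither be corrected by convex integration nor solve the original equation. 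Either way the glued function $u$ fails Definition \ref{def:global-weak-sol}. Ironically, your opening paragraph correctly identifies exactly this obstruction (``a single globally defined smooth subsolution \ldots cannot be produced''), but the construction you then give is a single-background construction.

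The paper resolves this with a genuine two-equation patching, which is the content your proposal is missing. After obtaining $u^\star_1$ from the first modified problem (flux $\rho^\star_1$ altered on the band, exactly as your $\tilde\rho$), one chooses $T_1$ so that $M_1:=\max_{\bar\Omega}u^\star_1(\cdot,T_1)\le\frac{\bar u_0+s^-_0}{2}<s^-_0$, sets $u_1:=u^\star_1(\cdot,T_1)$, and then solves a \emph{second} modified problem on $\Omega\times(T_1,\infty)$ with initial datum $u_1$ and a \emph{different} flux $\rho^\star_2$ satisfying $\rho^\star_2=\rho$ on $[-1,M_1]$ and uniform parabolicity. By the maximum principle its solution $u^\star_2$ stays in $[\min_{\bar\Omega}u_1,\,M_1]$, where $\rho^\star_2=\rho$, so $u^\star_2$ genuinely solves the original no-flux problem on $[T_1,\infty)$; crucially $u^\star_2\ne u^\star_1|_{t\ge T_1}$ (they solve different PDEs and agree only on $\bar\Omega\times\{T_1\}$), which is precisely the ``anomalous patching'' in the theorem's name. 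The gluing across $t=T_1$ in the weak formulation then works because the convex-integration solution $z$ agrees with the potential $z^\star$ of $u^\star_1$ on $\bar\Omega\times\{T_1\}$ (the open set $Q\subset\Omega\times(0,T_1)$ cannot contain the top slice), so the term $\int_0^L u_1(x)\varphi(x,T_1)\,dx$ produced by integrating by parts over $\Omega_{T_1}$ cancels against $I_2=-\int_0^L u_1(x)\varphi(x,T_1)\,dx$ coming from Proposition \ref{prop:classical-global} applied to $u^\star_2$. Your remaining steps (choice of walls, application of Theorem \ref{thm:two-wall}, mass conservation via lateral boundary values of $v$) do match the paper, but without the second modified problem the theorem is not proved.
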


In this case, we perform a somewhat artificial adjustment using the smooth functions $u^\star_1$ and $u^\star_2$, which share the same values at $t=T_1$, to obtain global weak solutions. These solutions are partially equal to $u^\star_1$ in $\Omega_{T_1}$ and identically equal to $u^\star_2$ for $t\ge T_1$. Fine density mixtures of the weak solutions appear elsewhere in the finite space-time domain $\Omega_{T_1}$, where they differ from $u^\star_1.$ However, the global behavior of the weak solutions is identical to that of $u^\star_2.$

The fourth result, related to case (iii), is as follows.

\begin{thm}[Case (iii): finite extinction of density mixtures]\label{thm:(iii)-1}
Assume that
\[
\mbox{\emph{(iii)} $m_0\le s^+_0<\bar{u}_0$}.
\]
Let
\[
\rho(s^+_0)\le r_1<r_2<\min\{\rho(\bar{u}_0),r^*\}.
\]
Then there exist a function $u^\star\in C^{2,1}(\bar\Omega_\infty;[0,1])$ with
\[
u^\star\in C^{2+a,1+\frac{a}{2}}(\bar\Omega_T;[0,1])\;\;\forall T>0,
\]
a nonempty bounded open set $Q\subset\Omega_\infty$ with $\bar{Q}\cap (\Omega\times\{0\})\ne\emptyset$,
and infinitely many global weak solutions $u\in L^{\infty}(\Omega_\infty;[0,1])$ to problem \eqref{ib-P} satisfying the following:
\begin{itemize}
\item[(a)] \underline{\emph{Smoothing in finite time:}}
\[
u=u^\star\;\;\mbox{in $\Omega_\infty\setminus \bar{Q};$}
\]
\item[(b)] \underline{\emph{Density mixtures:}}
\[
u\in[s^-(r_1),s^-(r_2)]\cup[s^+(r_1),s^+(r_2)]\;\;\mbox{a.e. in $Q;$}
\]
\item[(c)] \underline{\emph{Fine-scale oscillations:}} for any nonempty open set $O\subset Q,$
\[
\underset{O}{\mathrm{ess\,osc}}\, u \ge s^+(r_1)-s^-(r_2)>0;
\]
\item[(d)] \underline{\emph{Conservation of total population:}}
\[
\int_\Omega u(x,t)\,dx= \int_\Omega u^\star(x,t)\,dx=L\bar{u}_0\;\;\forall t\ge 0;
\]
\item[(e)] \underline{\emph{Maximum principle:}}
\[
0\le\min_{\bar\Omega}u^\star(\cdot,s)\le u^\star(x,t)\le\max_{\bar\Omega}u^\star(\cdot,s)\le 1\;\;\forall x\in\bar\Omega,\,\forall t\ge s\ge0;
\]
\item[(f)] \underline{\emph{Uniform stabilization:}}
\[
\|u^\star(\cdot,t)-\bar{u}_0\|_{L^\infty(\Omega)}\to 0\;\;\mbox{as $t\to\infty$.}
\]
\end{itemize}
\end{thm}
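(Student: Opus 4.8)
\emph{Proof proposal.} The plan is to prove Theorem \ref{thm:(iii)-1} by reducing problem \eqref{ib-P} in case (iii) to the special solvability result for the partial differential inclusion, Theorem \ref{thm:two-wall}, following the same scheme as the parallel case (ii-1) (Theorem \ref{thm:(ii)-1}) but with the roles of the upper and lower forward branches interchanged. Motivated by the conservation-law structure behind Definition \ref{def:global-weak-sol}, I would first recast $u_t=(\rho(u))_{xx}$ as a first-order system: introduce a potential $w$ with $w_x=u$ and $w_t=(\rho(u))_x$, so that a weak solution corresponds to a Lipschitz $w$ whose space-time gradient obeys the nonlinear relation $w_t=(\rho(w_x))_x$. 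The essential point is that, over the admissible flux window $[r_1,r_2]$, this relation is governed by the two monotone \emph{walls} $s=s^-(r)$ and $s=s^+(r)$ (the two forward branches of $\rho^{-1}$), and its relaxation is the region between them; this is precisely the two-wall set for which Theorem \ref{thm:two-wall} furnishes, on any prescribed open set $Q$ and from any strictly interior subsolution, infinitely many exact solutions attaining the walls.

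The first concrete step is to manufacture the smooth profile $u^\star$. Because $\bar u_0>s^+_0$ and $r_2<\rho(\bar u_0)$, I would replace $\rho$ by a monotone relaxation $\bar\rho\in C^2(\R)$ with $\bar\rho'>0$ that agrees with $\rho$ on the forward ranges $\{s\le s^-(r_1)\}$ and $\{s\ge s^+(r_2)\}$ and interpolates monotonically, with flux values in $(r_1,r_2)$, across the intermediate band $(s^-(r_1),s^+(r_2))$. The problem $u^\star_t=(\bar\rho(u^\star))_{xx}$ with datum $u_0$ and the no-flux condition is then uniformly parabolic, so classical Schauder theory yields a global $u^\star\in C^{2,1}(\bar\Omega_\infty;[0,1])\cap C^{2+a,1+\frac a2}(\bar\Omega_T;[0,1])$ with $u^\star(\cdot,0)=u_0$, while Lemma \ref{lem:classical} (applied to $\bar\rho$) delivers the maximum principle (e), mass conservation, and uniform stabilization $u^\star\to\bar u_0$ (f). I would then set $Q:=\{(x,t)\in\Omega_\infty : s^-(r_1)<u^\star(x,t)<s^+(r_2)\}$, the region where $\bar\rho$ genuinely differs from $\rho$. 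Since $\bar u_0>s^+(r_2)$, uniform stabilization forces $u^\star>s^+(r_2)$ for large $t$, so $Q$ is bounded; and $\bar Q$ meets $\Omega\times\{0\}$ because $m_0\le s^+_0<s^+(r_2)\le M_0$ and continuity of $u_0$ put part of the initial slice into the band. On $\Omega_\infty\setminus\bar Q$ one has $\bar\rho=\rho$, so $u^\star$ solves the original equation classically there and serves as the outer solution.

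With $u^\star$ and $Q$ in hand, I would verify that $(u^\star,\bar\rho(u^\star))$ is a strictly interior subsolution of the two-wall inclusion on $Q$ and invoke Theorem \ref{thm:two-wall} to obtain infinitely many $u\in L^\infty(\Omega_\infty;[0,1])$ that equal $u^\star$ off $\bar Q$, take values in $[s^-(r_1),s^-(r_2)]\cup[s^+(r_1),s^+(r_2)]$ a.e.\ in $Q$, and oscillate between the walls on every open subset, yielding (a), (b) and (c) with $\underset{O}{\mathrm{ess\,osc}}\,u\ge s^+(r_1)-s^-(r_2)>0$. Because the construction shares the potential boundary data of $u^\star$ on $\partial Q$, each $u$ is a genuine weak solution of \eqref{ib-P} in the sense of Definition \ref{def:global-weak-sol} with the correct datum $u_0$. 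Conservation (d) follows from the mean-preserving character of the inclusion on each time slice together with the no-flux mass conservation of $u^\star$, and the global behavior (e), (f) is inherited since $u\equiv u^\star$ for $t$ large.

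The main obstacle, and the point where this paper departs from the Dirichlet treatment of \cite{CKK}, is enforcing the no-flux condition $\sigma(u)u_x=0$ on $\partial\Omega\times(0,\infty)$ through the convex integration. If the band region $Q$ reaches the lateral boundary, then the oscillatory solution is built right up to $\partial\Omega$, where Theorem \ref{thm:two-wall}, which solves a Dirichlet problem for the inclusion, does not by itself control the normal flux. I would resolve this by even reflection across $x=0$ and $x=L$, turning the no-flux problem into an inclusion with no lateral boundary on a doubled, spatially periodic domain, and by carrying out the construction symmetrically so that $u_x$, and hence $\sigma(u)u_x$, vanishes on $\partial\Omega$ by symmetry; the compatibility $u_0'(0)=u_0'(L)=0$ makes $u^\star$, and thus its reflection, of class $C^{2,1}$ across the reflected boundary, so the reflected subsolution is admissible. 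A secondary technical care is keeping the subsolution strictly inside the relaxed set up to the closure $\bar Q\cap(\Omega\times\{0\})$ and near the tips $s^\pm_0$ of the walls; this strict-interior hypothesis of Theorem \ref{thm:two-wall} is secured by routing the graph of $\bar\rho$ strictly between the two walls, as built into its choice with $r_1<\bar\rho<r_2$ on the band.
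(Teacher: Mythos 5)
Your core construction matches the paper's scheme exactly: the uniformly parabolic modification $\bar\rho$ of $\rho$ that agrees with $\rho$ outside $[s^-(r_1),s^+(r_2)]$ and whose graph is threaded strictly between the two walls, the classical solution $u^\star$ furnished by \cite[Theorem 13.24]{Ln} and Lemma \ref{lem:classical} (giving (e), (f) and conservation), the band region $Q=\{(x,t):s^-(r_1)<u^\star(x,t)<s^+(r_2)\}$ made bounded by uniform stabilization because $\bar u_0>s^+(r_2)$, and the application of Theorem \ref{thm:two-wall} to the associated strict subsolution. Indeed, the paper treats case (iii) by the very argument it writes out for Theorem \ref{thm:(ii)-1}, with the roles of the two forward branches mirrored, and up to this point your proposal is sound.

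The genuine gap is your final paragraph. You declare the no-flux condition to be the ``main obstacle'' and propose to enforce it by even reflection across $x=0$ and $x=L$ together with a ``symmetric'' convex integration on a doubled periodic domain. This step would fail as written: Theorem \ref{thm:two-wall} asserts no symmetry of its solutions, and convex integration constructions do not automatically inherit a reflection symmetry of the subsolution, so ``carrying out the construction symmetrically'' presupposes a symmetric refinement of Theorem \ref{thm:two-wall} that is neither stated nor proved anywhere; moreover, for the $L^\infty$ solutions produced, the pointwise trace of $\sigma(u)u_x$ on $\partial\Omega$ is not even defined, so symmetry could not yield the boundary condition in a pointwise sense in any case. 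The correct, and much simpler, mechanism is the one you half-stated in your third paragraph: in Definition \ref{def:global-weak-sol} the no-flux condition is encoded weakly, through test functions satisfying $\varphi_x=0$ (rather than $\varphi=0$) on $\partial\Omega\times[0,T]$. Since $Q\subset\Omega_\infty$, the lateral boundary lies in $\bar\Omega_0^{T_1}\setminus Q$, where Theorem \ref{thm:two-wall}(ii) gives $z=z^\star$; one then verifies identity \eqref{eq:weaksol} by a direct integration by parts, using $w_x=v$ and $w_t=\rho(v_x)$ a.e., the equality of the traces of $(v,w)$ with $(v^\star,w^\star)$ on the lateral boundary and at $t=0$, and the fact that $v^\star_t=\sigma^\star(v^\star_x)v^\star_{xx}=0$ on $\partial\Omega\times(0,\infty)$ because $u^\star$ solves the modified no-flux problem. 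This computation, carried out verbatim in the proof of Theorem \ref{thm:(ii)-1}, is what must replace your reflection argument; no doubling of the domain is needed, and with that substitution the rest of your proposal goes through.
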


This result is quite similar to the result of case (ii-1), involving local fine density mixtures and global stabilization.

The final main result of the paper addresses the most intriguing situation, case (iv), where the initial average density $\bar u_0$ lies in the backward regime $[s^-_0, s^+_0]$.


\begin{thm}[Case (iv): everlasting density mixtures]\label{thm:(iv)}
Assume that
\[
\mbox{\emph{(iv)} $\bar{u}_0\in[s^-_0, s^+_0]$,}
\]
and let
\[
\rho(s^+_0)< r_0 < r^*.
\]
Then there exist a function $u^\star_1\in C^{2+a,1+\frac{a}{2}}(\bar\Omega_{T_1};[0,1])$ for some $T_1>0$, a nonempty open set $Q_1\subset\Omega_{T_1}$ with
\[
\bar{Q}_1\cap(\Omega\times\{0\})\ne\emptyset\;\;\mbox{and} \;\;\bar\Omega\times\{T_1\}\subset\bar{Q}_1,
\]
and infinitely many global weak solutions $u\in L^{\infty}(\Omega_\infty;[0,1])$ to problem \eqref{ib-P} satisfying the following:
\begin{itemize}
\item[(a)] \underline{\emph{Smoothness in $\Omega_{T_1}\setminus\bar Q_1$:}}
\[
u=u^\star_1\;\;\mbox{in $\Omega_{T_1}\setminus\bar Q_1;$}
\]
\item[(b)] \underline{\emph{Local and global density mixtures:}}
\[
u\in[s^-_1,s^-_2]\cup[s^+_1,s^+_2]\;\;\mbox{a.e. in $Q_1\cup(\Omega\times(T_1,\infty));$}
\]
\item[(c)] \underline{\emph{Local and global fine-scale oscillations:}} for any nonempty open set $O\subset Q_1\cup(\Omega\times(T_1,\infty)),$
\[
\underset{O}{\mathrm{ess\,osc}}\, u \ge s^+_0-s^-_0>0;
\]
\item[(d)] \underline{\emph{Conservation of total population:}}
\[
\int_\Omega u(x,t)\,dx=L\bar{u}_0\;\;\forall t\ge 0;
\]
\item[(e)] \underline{\emph{Eventual two-point density:}}
\[
\|\dist(u,\{s^+(r_0),s^-(r_0)\})\|_{L^\infty(\Omega\times(t,\infty))}\to 0\;\;\mbox{as $t\to\infty$.}
\]
\end{itemize}
\end{thm}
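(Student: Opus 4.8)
The plan is to realize the wild solutions as exact solutions of the partial differential inclusion formulated in Section~\ref{sec:generic} and to produce them, on suitable space--time mixing regions, by invoking the solvability result Theorem~\ref{thm:two-wall}. Writing $w=\rho(u)$, the equation $u_t=w_{xx}$ together with the pointwise constraint $w=\rho(u)$ is recast, after introducing a potential whose space--time gradient encodes the pair $(u,w)$, as the requirement that this gradient lie a.e.\ in a target set consisting of the two forward arcs $\{(s,\rho(s)):s\in[s^-_1,s^-_2]\}$ and $\{(s,\rho(s)):s\in[s^+_1,s^+_2]\}$ (the two ``walls''), while a strict subsolution is a smooth map whose encoded gradient lies in the interior of the lens-shaped hull bounded by these walls. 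The decisive observation for case~(iv) is that, because $\bar u_0\in[s^-_0,s^+_0]$ and $\rho(s^+_0)<r_0<r^*$, the macroscopic state $(\bar u_0,r_0)$ is an \emph{interior} point of this hull: indeed $s^-(r_0)<s^-_0\le\bar u_0\le s^+_0<s^+(r_0)$, so $(\bar u_0,r_0)$ lies strictly between the two walls along the horizontal level $w=r_0$. This interior position is exactly what allows the oscillations to persist for all time and pins their asymptotic values to the two-point set $\{s^-(r_0),s^+(r_0)\}$.

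Next I would construct the smooth background. On $\Omega_{T_1}$ the goal is a map $u^\star_1\in C^{2+a,1+\frac a2}(\bar\Omega_{T_1};[0,1])$ with $u^\star_1(\cdot,0)=u_0$, satisfying the no-flux compatibility $(\rho(u^\star_1))_x=0$ on $\partial\Omega$ and the conservation identity $\int_\Omega u^\star_1(\cdot,t)\,dx=L\bar u_0$, together with a nonempty open set $Q_1\subset\Omega_{T_1}$ on which $u^\star_1$ is a \emph{strict} subsolution of the inclusion, chosen so that $\bar Q_1$ meets $\Omega\times\{0\}$ and $\bar\Omega\times\{T_1\}\subset\bar Q_1$. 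Off $\bar Q_1$ the map $u^\star_1$ must be a genuine classical solution, so that the wild solution may coincide with it there; where $u_0$ already sits in the forward regime and away from $t=T_1$, classical parabolic theory (in the spirit of Lemma~\ref{lem:classical} and Theorem~\ref{thm:(i)}) supplies such a solution. In the complementary region I would glue in a smooth deformation that drives the whole slice $\bar\Omega$ into the interior of the hull, arriving at $t=T_1$ at the constant macroscopic interior state $(\bar u_0,r_0)$. For $t>T_1$ I would then take the stationary subsolution associated with this constant interior state (extended smoothly across $t=T_1$), which is trivially a strict subsolution and conserves $\int_\Omega u=L\bar u_0$.

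With the subsolution in hand I would apply Theorem~\ref{thm:two-wall} on the mixing region $Q_1$, and on $\Omega\times(T_1,\infty)$ through an exhaustion by cylinders $\Omega\times(T_1,T)$ followed by a standard patching/diagonal argument, to obtain infinitely many maps $u\in L^\infty(\Omega_\infty;[0,1])$ that solve the inclusion a.e., equal $u^\star_1$ on $\Omega_{T_1}\setminus\bar Q_1$, take values in $[s^-_1,s^-_2]\cup[s^+_1,s^+_2]$ a.e.\ on the mixing set, and obey the oscillation bound~(c). Unwinding the potential formulation, each such $u$ is a global weak solution in the sense of Definition~\ref{def:global-weak-sol}: the admissible test functions already encode the no-flux condition, which the construction respects, and the initial datum is attained because $u=u^\star_1$ near $\Omega\times\{0\}$ outside $Q_1$ while the perturbations are arranged to have vanishing trace at $t=0$. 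The average-preserving structure of the inclusion (the microstructure has local barycenter equal to the $u$-component of the subsolution) yields the exact conservation~(d), and the asymptotic statement~(e) follows because for $t>T_1$ the flux component of the subsolution is locked at $r_0$, forcing $\rho(u)\to r_0$ and hence $u$ toward $\{s^-(r_0),s^+(r_0)\}$ uniformly as $t\to\infty$.

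The main obstacle I expect lies in the construction and verification of the smooth background rather than in the convex integration itself, which is packaged in Theorem~\ref{thm:two-wall}. One must simultaneously match the prescribed smooth datum $u_0$ together with its no-flux compatibility, keep the encoded gradient strictly inside the two-wall hull throughout $Q_1$, sweep the entire spatial slice into the interior within the finite time $T_1$, conserve the total population at every instant, and land on the interior state $(\bar u_0,r_0)$ at the correct \emph{flux} level $r_0$ (which is genuinely distinct from $\rho(\bar u_0)$), all within the H\"older class $C^{2+a,1+\frac a2}$ required to invoke Theorem~\ref{thm:two-wall}. The second delicate point is the passage to the infinite time interval: the oscillation amplitudes, the two-sided bound $u\in[0,1]$, and the pinning of the flux to $r_0$ must be controlled \emph{uniformly} in $t$, so that the patched solution still satisfies the quantitative stabilization~(e); this forces the subsolution to approach its asymptotic interior state at a rate compatible with the error estimates of the convex integration scheme.
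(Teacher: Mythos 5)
Your overall framework (two-wall differential inclusion, potentials $(v,w)$ with $w_x=v$, $w_t=\rho(v_x)$, strict subsolutions, Theorem \ref{thm:two-wall}) matches the paper, and your first-slab construction on $\Omega_{T_1}$ is in the right spirit. But there is a genuine gap in how you obtain property (e), the eventual two-point density. You propose a single stationary strict subsolution on $\Omega\times(T_1,\infty)$ with flux component locked at $r_0$, and claim this "forces $\rho(u)\to r_0$." It does not. Theorem \ref{thm:two-wall} gives $\|z-z^\star\|_{L^\infty}<\epsilon$ and $\|v_t-v^\star_t\|_{L^\infty}<\epsilon$, i.e.\ closeness of the \emph{potentials} and of one gradient entry, but no control whatsoever on $v_x$ or $w_t$: the whole point of convex integration is that these gradient components oscillate across the full target set. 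A solution of the inclusion with walls parametrized by $r\in[r_1,r_2]$ only satisfies $w_t=\rho(u)\in[r_1,r_2]$ and $u\in[s^-(r_1),s^-(r_2)]\cup[s^+(r_1),s^+(r_2)]$ a.e., uniformly in time; nothing pins $\rho(u)$ near the subsolution's flux value $r_0$, so with fixed walls the distance of $u$ to $\{s^-(r_0),s^+(r_0)\}$ need not decay at all as $t\to\infty$. (Your closing remark about "error estimates of the convex integration scheme" controlling this is exactly the step that fails.)

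The paper's missing idea is to make the \emph{target set itself} shrink in time: it fixes $T_k=T_1+k-1$, chooses $r_{1k}\nearrow r_0$ and $r_{2k}\searrow r_0$, and on each slab $\Omega\times(T_{k-1},T_k)$ runs a freshly modified uniformly parabolic flow $\rho^\star_k$ (equal to $\rho$ outside $(s^-(r_{1k}),s^+(r_{2k}))$) started from the trace $u_{k-1}$ of the previous slab's classical solution; the maximum principle keeps these traces inside $(s^-(r_0),s^+(r_0))$, so for $k\ge2$ the mixing set is the whole slab, and Theorem \ref{thm:two-wall} is applied separately on each bounded slab with walls restricted to $[r_{1k},r_{2k}]$. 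Then $u\in[s^-(r_{1k}),s^-(r_{2k})]\cup[s^+(r_{1k}),s^+(r_{2k})]$ a.e.\ for $t\in(T_{k-1},T_k)$, and since these sets collapse to $\{s^-(r_0),s^+(r_0)\}$, property (e) follows from the inclusion, not from any proximity estimate; the weak-solution identity is then verified by a telescoping integration-by-parts over the slabs. This slab decomposition also removes your need to invoke Theorem \ref{thm:two-wall} on an unbounded cylinder, which the theorem as stated does not cover. A minor further point: conservation (d) comes from $\int_\Omega u\,dx=v(L,t)-v(0,t)=v^\star(L,t)-v^\star(0,t)$, i.e.\ from the boundary agreement of the potentials, not from any "local barycenter" property of the microstructure, which convex integration does not provide.
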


In this situation, since the initial average density $\bar u_0$ falls within the backward regime $[s^-_0, s^+_0]$, it is inevitable that the weak solutions will always experience fine density mixtures. These density mixtures may emerge immediately after the initial time $t=0$. Subsequently, from a certain moment $t=T_1$ onward, density mixtures will be present throughout the entire habitat $\Omega$.

\subsection{Approach by differential inclusion} Let us take a moment here to explain our approach to prove Theorems \ref{thm:(ii)-1}, \ref{thm:(ii)-2}, \ref{thm:(iii)-1} and \ref{thm:(iv)}.
To solve the equation in (\ref{ib-P}), we formally put $v_x=u$ in $\Omega_\infty$ for some function $v:\Omega_\infty\to\R$; so we consider the equation,
\[
v_t=(\rho(v_x))_x\;\;\mbox{in $\Omega_\infty$.}
\]

To solve the previous equation in the sense of distributions in $\Omega_\infty,$
we may try to find a vector function $z=(v,w)\in W^{1,\infty}(\Omega_\infty;\R^{1+1})$ with $v_x\in L^\infty(\Omega_\infty;[0,1])$ such that
\begin{equation}\label{alt-system}
w_x=v\;\;\mbox{and}\;\;w_t=\rho(v_x)\;\;\mbox{a.e. in $\Omega_\infty$}.
\end{equation}
If there is such a function $z=(v,w),$ we take $u=v_x \in L^\infty(\Omega_\infty;[0,1])$; then from the integration by parts, for each $\varphi\in C^\infty_c(\Omega_\infty)$,
\[
\begin{split}
\int_0^\infty\int_0^L  & (u\varphi_t +\rho(u)\varphi_{xx})\,dxdt = \int_0^{T_\varphi}\int_0^L (v_x\varphi_t+\rho(v_x)\varphi_{xx})\,dxdt \\
 = &\, \int_0^{T_\varphi}\int_0^L (-v\varphi_{tx}+w_t\varphi_{xx})\,dxdt = \int_0^{T_\varphi}\int_0^L (-v\varphi_{tx}+w_x\varphi_{xt})\,dxdt=0,
\end{split}
\]
where $T_\varphi:=\sup_{(x,t)\in\mathrm{spt}(\varphi)}t +1$. Hence, $u$ is a global weak solution of the equation in (\ref{ib-P}) in the sense of distributions in $\Omega_\infty.$

On the other hand, for each $b\in\R,$ define
\[
\Sigma(b)=\Sigma_{\rho}(b)=\left\{
\begin{pmatrix}
s & c \\
b & \rho(s)
\end{pmatrix}\in\M^{2\times 2}\,\Big|\, 0\le s\le 1,\, c\in\R
\right\};
\]
then system (\ref{alt-system}) is equivalent to the inhomogeneous partial differential inclusion,
\[
\nabla z=\begin{pmatrix}
v_x & v_t \\
w_x & w_t
\end{pmatrix}\in \Sigma(v)\;\;\mbox{a.e. in $\Omega_\infty$},
\]
where $\nabla=(\partial_x,\partial_t)$ is the space-time gradient operator. In this regard, utilizing the method of convex integration by M\"uller \& \v Sver\'ak \cite{MSv2}, we aim at solving this inclusion for certain sets $K(b)\subset\Sigma(b)$ $(b\in\R)$ in a generic setup (section \ref{sec:generic}) while reflecting the initial and no-flux boundary conditions in (\ref{ib-P}).

\section{Generic problem}\label{sec:generic}

In this independent section, we present a generic inclusion problem that can be applied to the main problem (\ref{ib-P}) as a special case.
Specifically, we will use Theorem \ref{thm:two-wall} to prove Theorems \ref{thm:(ii)-1}, \ref{thm:(ii)-2}, \ref{thm:(iii)-1}, and \ref{thm:(iv)}.
A detailed proof of Theorem \ref{thm:two-wall} can be found in \cite{CKK}.

\subsection{Two-wall inclusions}

As a setup, we fix some generic notations and introduce a two-wall partial differential inclusion of inhomogeneous type.

\subsubsection{Related sets}

Let $r_1<r_2$, and let $\omega_1,\omega_2\in C([r_1,r_2])$ be any two functions such that
\[
\max_{[r_1,r_2]}\omega_1 < \min_{[r_1,r_2]}\omega_2.
\]
For each $b\in\R$, define the matrix sets
\[
\begin{split}
K^+(b)=K^+_{\omega_2}(b) & =\bigg\{\begin{pmatrix}
\omega_2(r) & c \\
b & r
\end{pmatrix}\in\M^{2\times 2}\, \Big|\, r\in[r_1,r_2],\, c\in\R \bigg\},\\
K^-(b)=K^-_{\omega_1}(b) & =\bigg\{\begin{pmatrix}
\omega_1(r) & c \\
b & r
\end{pmatrix}\in\M^{2\times 2}\, \Big|\, r\in[r_1,r_2],\, c\in\R \bigg\},\\
U(b)=U_{\omega_1,\omega_2}(b) & =\bigg\{\begin{pmatrix}
s & c \\
b & r
\end{pmatrix}\in\M^{2\times 2}\, \Big|\, r\in(r_1,r_2),\, \omega_1(r)<s<\omega_2(r), \, c\in\R \bigg\},\\
\end{split}
\]
and $K(b)=K_{\omega_1,\omega_2}(b)=K^+(b)\cup K^-(b)$. Let
\[
\begin{split}
K^+=K^+_{\omega_2} & =\{(\omega_2(r),r)\,|\, r\in[r_1,r_2]\}, \\
K^-=K^-_{\omega_1} & =\{(\omega_1(r),r)\,|\, r\in[r_1,r_2]\},
\end{split}
\]
and $K=K_{\omega_1,\omega_2}=K^+\cup K^-$. Also, let
\[
U =\{(s,r)\, |\, r\in(r_1,r_2),\, \omega_1(r)<s<\omega_2(r) \}.
\]

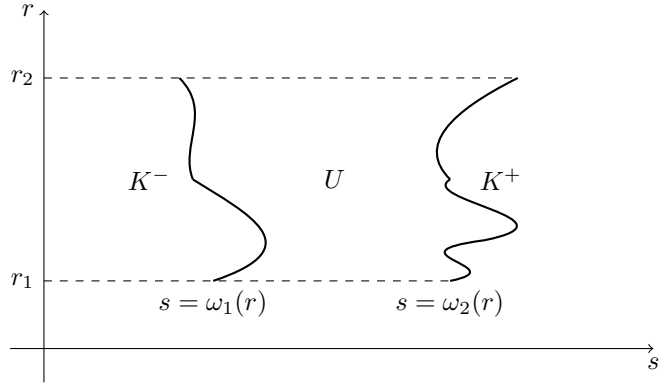
\begin{figure}[ht]
\begin{center}
\begin{tikzpicture}[scale =0.9]
    \draw[->] (-0.5,0) -- (9,0);
    \draw[->] (0,-0.5) -- (0,5);
    \draw[dashed] (0,4)  --  (7,4) ;
    \draw[dashed] (0,1)  --  (6,1) ;
    \draw[thick]   (2, 4) .. controls (2.5,3.5) and  (2,3)   ..(2.2,2.5);
    \draw[thick]   (2.2,2.5) .. controls (3,2) and  (4,1.5)   ..(2.5,1);
    \draw[thick]   (7, 4) .. controls  (6, 3.5) and (5.5,3) ..(6, 2.5 );
    \draw[thick]   (6, 2.5) .. controls  (5.5,2.2) and (8,1.9) ..(6.5,1.6);
    \draw[thick]   (6.5,1.6) .. controls  (5,1.4) and (7,1.2) ..(6,1);
    \draw (0,4) node[left] {$r_2$};
    \draw (0,1) node[left] {$r_1$};
    \draw (0,5) node[left] {$r$};
    \draw (9,0) node[below] {$s$};
    \draw (2.5,1) node[below] {$s=\omega_1(r)$};
    \draw (6,1) node[below] {$s=\omega_2(r)$};
    \draw (2,2.5) node[left] {$K^-$};
    \draw (6.3,2.5) node[right] {$K^+$};
    \draw (4,2.5) node[right] {$U$};
    \end{tikzpicture}
\end{center}
\caption{The right wall $K^+$ and left wall $K^-$.}
\label{fig2-1}
\end{figure}

\subsubsection{Two-wall inclusions}

Let
\[
\Omega_{t_1}^{t_2}=\Omega\times(t_1,t_2)=(0,L)\times(t_1,t_2)\subset\R^2,
\]
where $t_1<t_2$ are any two fixed real numbers, and let $Q\subset \Omega_{t_1}^{t_2}$ be a nonempty open set.
Consider the inhomogeneous partial differential inclusion,
\begin{equation}\label{inclusion}
\nabla z\in K(v)\;\;\mbox{in $Q$},
\end{equation}
where $z=(v,w):Q\to \R^2$.
Regarding this, we fix some terminologies.

\begin{defn}
Let $z=(v,w)\in W^{1,\infty}(Q;\R^2).$ Then the function $z$ is called a \emph{solution} of inclusion (\ref{inclusion}) if
\[
\nabla z\in K(v)\;\;\mbox{a.e. in $Q$},
\]
a \emph{subsolution} of (\ref{inclusion}) if
\[
\nabla z\in K(v)\cup U(v)\;\;\mbox{a.e. in $Q$},
\]
and a \emph{strict subsolution} of (\ref{inclusion}) if
\[
\nabla z\in U(v)\;\;\mbox{a.e. in $Q$},
\]
respectively.
\end{defn}

Observe that if $z=(v,w)\in W^{1,\infty}(Q;\R^2)$ is a solution of (\ref{inclusion}), then
\[
(v_x,w_t)\in K=K^+\cup K^- \;\;\mbox{a.e. in $Q$;}
\]
that is, $(v_x,w_t)$ lies either in the ``right wall'' $K^+$ or in the ``left wall'' $K^-$ almost everywhere in $Q$ (see Figure \ref{fig2-1}).

\subsection{Special solutions to generic problem}
Continuing the previous setup, we present an important existence result on inclusion (\ref{inclusion}) that will serve as the main ingredient for proving Theorems \ref{thm:(ii)-1}, \ref{thm:(ii)-2}, \ref{thm:(iii)-1}, and \ref{thm:(iv)}.

Assume that $z^\star=(v^\star,w^\star)\in C^1(\bar{\Omega}_{t_1}^{t_2};\R^2)$ is a function such that in $Q,$
\begin{equation}\label{subsolution-1}
w^\star_x = v^\star,\;\; r_1<w^\star_t<r_2, \;\;\mbox{and}\;\; \omega_1(w^\star_t)<v^\star_x<\omega_2(w^\star_t).
\end{equation}
From the definition of $U(b)$ $(b\in\R)$,
\[
\nabla z^\star= \begin{pmatrix}
v^\star_x & v^\star_t \\
w^\star_x & w^\star_t
\end{pmatrix} \in U(v^\star)\;\;\mbox{in $Q$;}
\]
that is, $z^\star$ is a strict subsolution of inclusion (\ref{inclusion}).
In particular, we have
\begin{equation}\label{subsolution-3}
(v^\star_x,w^\star_t)\in U \;\;\mbox{in $Q$.}
\end{equation}
Assume further that
\begin{equation}\label{subsolution-2}
(v^\star_x,w^\star_t)(Q) \cap \{(s,r)\in U\,|\, \dist((s,t),\partial U)<\delta\}\ne\emptyset
\end{equation}
for all sufficiently small $\delta>0.$

We are now ready to state the the main result of this section whose proof can be found in \cite{CKK}.

\begin{thm}\label{thm:two-wall}
Let $\epsilon>0.$ Then there exists a function $z=(v,w)\in W^{1,\infty}(\Omega_{t_1}^{t_2};\R^2)$ satisfying the following:
\begin{itemize}
\item[(i)] $z$ is a solution of inclusion (\ref{inclusion}),
\item[(ii)] $z=z^\star$\;\;on $\bar{\Omega}_{t_1}^{t_2}\setminus Q,$
\item[(iii)]$\nabla z=\nabla z^\star$\;\;a.e. on $\Omega_{t_1}^{t_2}\cap\partial Q,$
\item[(iv)] $\|z-z^\star\|_{L^\infty(\Omega_{t_1}^{t_2};\R^2)}<\epsilon,$
\item[(v)] $\|v_t-v^\star_t\|_{L^\infty(\Omega_{t_1}^{t_2})}<\epsilon,$
\item[(vi)] for any nonempty open set $O\subset Q,$
\[
\underset{O}{\mathrm{ess\,osc}}\, v_x \ge d_0,
\]
where $d_0:=\min_{[r_1,r_2]}\omega_2-\max_{[r_1,r_2]}\omega_1>0.$
\end{itemize}
\end{thm}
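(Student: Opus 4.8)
The plan is to solve the inhomogeneous inclusion \eqref{inclusion} by convex integration in the spirit of M\"uller \& \v Sver\'ak \cite{MSv2}, implemented as a Baire category argument; this is exactly the scheme carried out in \cite{CKK}. The first step is to read off the algebraic constraints hidden in $\nabla z\in K(v)$. Writing $z=(v,w)$, a competitor must satisfy the \emph{hard} constraints $w_x=v$ and $w_t\in[r_1,r_2]$, must place the pair $(v_x,w_t)$ on one of the two curves $K=K^+\cup K^-$, and leaves $v_t$ completely free. Under this reading a strict subsolution is precisely a $z$ with $(v_x,w_t)\in U$, and the given $z^\star$ of \eqref{subsolution-1}--\eqref{subsolution-2} is the base point of the construction.

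Next I would fix the complete metric space on which category operates. Let $\mathcal X$ be the closure, in the $C(\bar Q;\R^2)$ topology, of the piecewise-$C^1$ strict subsolutions $z$ that coincide with $z^\star$ near $\partial Q$, satisfy $w_x=v$ identically and $r_1<w_t<r_2$, and lie in the prescribed $\epsilon$-tube $\|z-z^\star\|_\infty<\epsilon$, $\|v_t-v^\star_t\|_\infty<\epsilon$. Then $\mathcal X\ne\emptyset$ (it contains $z^\star$) and is complete, and every member already satisfies properties (ii)--(v): items (ii) and (iii) because all corrections are supported in open sets compactly contained in $Q$, so $z=z^\star$ and $\nabla z=\nabla z^\star$ near and on $\Omega_{t_1}^{t_2}\cap\partial Q$; items (iv)--(v) by the tube constraint. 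It remains to produce, on a residual subset of $\mathcal X$, members satisfying (i) and (vi).

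The heart of the matter is a localized perturbation lemma: given any strict subsolution $z$ and any ball $B\subset\subset Q$, there is a correction $(\phi,\psi)$ compactly supported in $B$ — necessarily coupled by $\psi_x=\phi$ so that $w_x=v$ is preserved — such that $z+(\phi,\psi)$ is still a strict subsolution in $\mathcal X$, moves $z$ by an arbitrarily small $L^\infty$ amount, yet drives $(v_x,w_t)$ to within distance $\delta$ of $K$ on a definite fraction of $B$ and forces $\mathrm{ess\,osc}\,v_x\ge d_0$ there. The essential difficulty is that the rigid coupling $w_x=v$ with $v$ continuous means the gradient entry $w_x$ cannot oscillate at fine scale: any single rank-one jump $a\otimes n$ that changes both $v_x=(\nabla z)_{11}$ and $w_t=(\nabla z)_{22}$ necessarily changes $w_x=(\nabla z)_{21}$ as well. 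Hence a \emph{second-order} lamination is unavoidable, whose two admissible rank-one directions compose to the desired oscillation of $(v_x,w_t)$ between the walls; verifying that $U$ lies in the appropriate lamination hull of $K$, uniformly in the slowly varying parameter $b=v$, is where the strict separation $\max_{[r_1,r_2]}\omega_1<\min_{[r_1,r_2]}\omega_2$, the continuity of $\omega_1,\omega_2$, and the nondegeneracy \eqref{subsolution-2} enter.

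Finally I would run the category machinery. Endow $\mathcal X$ with the complete $C^0$ metric and consider, for a countable base $\{O_k\}$ of $Q$, the functionals $I_k(z)=\int_{O_k}\dist\big((v_x,w_t),K\big)\,dx\,dt$. The map $z\mapsto\nabla z$ is a Baire-one map on $\mathcal X$, so its set of continuity points is residual. At any continuity point the perturbation lemma forbids $(v_x,w_t)$ from staying at positive distance from $K$ on a set of positive measure, since otherwise a small correction would change $\nabla z$ by a fixed amount arbitrarily near $z$, contradicting continuity; thus every continuity point is a genuine solution satisfying (i). The same lemma shows that the sets $\{z\in\mathcal X:\mathrm{ess\,osc}_{O_k}\,v_x\ge d_0\}$ are (relatively) residual, so their countable intersection with the solution set is still residual and nonempty; any $z$ therein yields (vi) on every nonempty open $O\subset Q$ (any such $O$ contains some $O_k$). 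The main obstacle throughout is the inhomogeneity: because $K(v)$ moves with the solution's own value $v=w_x$, the perturbation lemma must be uniform in $b$ over the narrow range allowed by the $\epsilon$-tube, and it is precisely the enforced $L^\infty$-closeness to $z^\star$ that keeps this range — and hence the geometry of the walls — under control.
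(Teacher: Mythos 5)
Your overall architecture --- a complete metric space of strict subsolutions pinned to $z^\star$ near $\partial Q$, a localized perturbation lemma, and a Baire-one argument for the gradient map --- is the same M\"uller--\v{S}ver\'ak-type convex integration scheme that the paper relies on; note that the paper itself gives no proof of Theorem \ref{thm:two-wall} but defers entirely to \cite{CKK}, and your outline matches that strategy, with (ii)--(v) correctly disposed of by the boundary pinning and the $\epsilon$-tube.

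There is, however, a genuine gap at the heart of your proposal: the perturbation lemma is justified by a geometric claim that is false. You argue that since no single rank-one jump can change both $v_x$ and $w_t$ while preserving $w_x$, ``a second-order lamination is unavoidable.'' But the oscillation between the walls never needs to change $w_t$: for fixed $b$ and fixed $r\in[r_1,r_2]$, the matrices $A^{\pm}=\begin{pmatrix}\omega_{2/1}(r) & c\\ b & r\end{pmatrix}$ lie on $K^{+}(b)$ and $K^{-}(b)$ respectively and satisfy $A^{+}-A^{-}=e_1\otimes\big(\omega_2(r)-\omega_1(r),\,0\big)$, which has rank one; moreover every $\xi\in U(b)$ is an interior point of exactly such a segment. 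Thus $U(b)$ lies in the \emph{first} lamination hull of $K(b)$, with jumps only in the $(1,1)$-entry (interfaces $x=\mathrm{const}$, $w_t$ untouched) --- this elementary fact is precisely what makes the construction work, and your proposal never establishes the unnecessary second-order structure it claims to need, so its key lemma is left resting on a misdiagnosis. Relatedly, your closing concern that the inhomogeneity $b=v$ distorts ``the geometry of the walls'' is spurious: the $(s,r)$-projection of $K(b)$ is independent of $b$, and the inhomogeneity enters only through the exact constraint $w_x=v$, which your coupling $\psi_x=\phi$ preserves identically. The real difficulty --- which you name but do not resolve --- is that this exact coupling forces $\psi$ to be an $x$-primitive of the sawtooth $\phi$, so the induced error $\psi_t$ perturbs $w_t$, and one must show this error can be made small compared with the distance of $(v^\star_x,w^\star_t)$ to $\partial U$; this is where \eqref{subsolution-1}--\eqref{subsolution-2} and the choice of fine oscillation scales actually enter in \cite{CKK}. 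Finally, your residuality claim for (vi) is asserted rather than argued; the workable route is to observe that for a \emph{solution}, $\mathrm{ess\,osc}_{O_k}v_x<d_0$ forces $v_x$ into a single wall a.e.\ on $O_k$, and that the sets $\{v_x\le\max_{[r_1,r_2]}\omega_1 \text{ a.e.\ on }O_k\}$ and $\{v_x\ge\min_{[r_1,r_2]}\omega_2 \text{ a.e.\ on }O_k\}$ are closed with empty interior in $\mathcal X$, so that the solutions avoiding both, for every $k$, still form a residual set.
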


\section{Proof of main results}\label{sec:proof_main}
This final section is entirely dedicated to proving the main results: Theorems \ref{thm:(ii)-1}, \ref{thm:(ii)-2}, \ref{thm:(iii)-1} and \ref{thm:(iv)}, as well as the classical case, Theorem \ref{thm:(i)}.

\begin{proof}[Proof of Theorem \ref{thm:(i)}]
First, consider the case that $M_0<s^-_0.$ Take $M_1:=\frac{M_0+s^-_0}{2}$.
Using elementary calculus, we can choose a function $\rho^\star\in C^3(\R)$ such that
\[
\rho^\star=\rho\;\;\mbox{on $[-1,M_1]$}\;\;\mbox{and}\;\;\theta_0\le\sigma^\star:=(\rho^\star)'\le \theta_1\;\;\mbox{in $\R$,}
\]
for some constants $\theta_1>\theta_0>0.$
Then from \cite[Theorem 13.24]{Ln}, the modified problem,
\begin{equation*}
\begin{cases}
u^\star_{t} =(\rho^\star(u^\star))_{xx} & \mbox{in $\Omega_\infty$,}\\
u^\star =u_0 & \mbox{on $\Omega\times \{t=0\}$},\\
\sigma^\star(u^\star)u^\star_x=0 & \mbox{on $\partial\Omega\times(0,\infty)$,}
\end{cases}
\end{equation*}
admits a global classical solution $u^\star\in C^{2,1}(\bar\Omega_\infty)$ with
\[
u^\star\in C^{2+a,1+\frac{a}{2}}(\bar\Omega_T)\;\;\forall T>0.
\]
Also, it follows from  Lemma \ref{lem:classical} that
\begin{itemize}
\item[(i)] $
0\le\min_{\bar\Omega}u^\star(\cdot,s)\le u^\star(x,t)\le\max_{\bar\Omega}u^\star(\cdot,s)\le 1\;\;\forall x\in\bar\Omega,\,\forall t\ge s\ge0;
$
\item[(ii)] $
\int_\Omega u^\star(x,t)\,dx=L \bar{u}_0\;\;\mbox{for all $t\ge0$};
$
\item[(iii)] $
\|u^\star(\cdot,t)-\bar{u}_0\|_{L^\infty(\Omega)}\to 0\;\;\mbox{as $t\to\infty$}.
$
\end{itemize}
In particular, $-1<0\le m_0\le u^\star\le M_0<M_1$ in $\Omega_\infty;$
thus, from the choice of $\rho^\star,$ we see that $u:=u^\star$ is a global classical solution to problem (\ref{ib-P}) satisfying (a), (b), and (c).

Next, in the case that $m_0>s^+_0,$ take $m_1:=\frac{s^+_0+m_0}{2}$ and choose a function $\rho^\star\in C^3(\R)$ such that
\[
\rho^\star=\rho\;\;\mbox{on $[m_1,2]$}\;\;\mbox{and}\;\;\theta_2\le\sigma^\star\le \theta_3\;\;\mbox{in $\R$,}
\]
for some constants $\theta_3>\theta_2>0.$ Then the rest of the proof is similar to the above. We omit the details.
\end{proof}

\begin{proof}[Proof of Theorem \ref{thm:(ii)-1}]
In this case, we assume that
\begin{equation}\label{proof-3.2-1}
\bar{u}_0<s^-_2\le s^-_0\le M_0
\end{equation}
and that
\begin{equation}\label{proof-3.2-2}
\max\{\rho(\bar{u}_0),\rho(s^-_1)\}<r_1<r_2<\rho(s^-_2)=r^*.
\end{equation}

In order to fit into the setup in section \ref{sec:generic}, for any $r_1\le r\le r_2$, define
\[
\omega_1(r)=s^-(r)\;\;\mbox{and}\;\;\omega_2(r)=s^+(r);
\]
then $\omega_1,\omega_2\in C([r_1,r_2])$, and
\[
\max_{[r_1,r_2]}\omega_1=s^-(r_2)<s^+(r_1)=\min_{[r_1,r_2]}\omega_2.
\]

Next, using elementary calculus, we can choose a function $\rho^\star\in C^3(\R)$ such that
\begin{equation}\label{rho-modi}
\begin{cases}
  \rho^\star=\rho\;\;\mbox{on $[-1,s^-(r_1)]\cup[s^+(r_2),2]$,} \\[1mm]
  \mbox{$\exists\theta_1>\theta_0>0$ s.t.}\;\theta_0\le\sigma^\star:=(\rho^\star)'\le\theta_1\;\;\mbox{in $\R,$} \\[1mm]
  \rho^\star<\rho\;\;\mbox{on $(s^-(r_1),s^-(r_2)],$}\;\;\mbox{and}\;\;\rho^\star>\rho\;\;\mbox{on $[s^+(r_1),s^+(r_2)).$}
\end{cases}
\end{equation}
Then from \cite[Theorem 13.24]{Ln} and Lemma \ref{lem:classical}, the modified problem,
\begin{equation}\label{ib-modi}
\begin{cases}
u^\star_{t} =(\sigma^\star(u^\star)u^\star_x)_x=(\rho^\star(u^\star))_{xx} & \mbox{in $\Omega_\infty$,}\\
u^\star =u_0 & \mbox{on $\Omega\times \{t=0\}$},\\
\sigma^\star(u^\star)u^\star_x=0 & \mbox{on $\partial\Omega\times(0,\infty)$,}
\end{cases}
\end{equation}
possesses a global solution \underline{$u^\star\in C^{2,1}(\bar\Omega_\infty;[0,1])$ with $u^\star\in C^{2+a,1+\frac{a}{2}}(\bar\Omega_T;[0,1])$ for} \underline{each $T>0$} such that
\begin{itemize}
\item[(i)] $
0\le\min_{\bar\Omega}u^\star(\cdot,s)\le u^\star(x,t)\le\max_{\bar\Omega}u^\star(\cdot,s)\le 1\;\;\forall x\in\bar\Omega,\,\forall t\ge s\ge0;
$
\item[(ii)] $
\int_\Omega u^\star(x,t)\,dx=L \bar{u}_0\;\;\mbox{for all $t\ge0$};
$
\item[(iii)] $
\|u^\star(\cdot,t)-\bar{u}_0\|_{L^\infty(\Omega)}\to 0\;\;\mbox{as $t\to\infty$}.
$
\end{itemize}
Thus, \underline{(e) and (f) in Theorem \ref{thm:(ii)-1} are satisfied.}

We define
\begin{equation}\label{def:vstar}
v^\star(x,t)=\int_0^x u^\star(y,t)\,dy+ \int_0^t u^\star_x(0,s)\,ds\;\;\forall (x,t)\in\bar\Omega_\infty;
\end{equation}
then from (\ref{rho-modi}) and (\ref{ib-modi}), $v^\star\in C^{3,1}(\bar\Omega_\infty)$ satisfies that for all $(x,t)\in\Omega_\infty$,
\[
\begin{split}
v^\star_t(x,t) & = \int_0^x u^\star_t(y,t)\,dy+ u^\star_x(0,t) = \int_0^x (\sigma^\star(u^\star)u^\star_x)_x(y,t)\,dy+ u^\star_x(0,t)\\
& = \sigma^\star(u^\star(x,t))u^\star_x(x,t) - \sigma^\star(u^\star(0,t))u^\star_x(0,t) + u^\star_x(0,t)\\
& = \sigma^\star(v^\star_x(x,t))v^\star_{xx}(x,t) = (\rho^\star(v^\star_x))_{x}(x,t).
\end{split}
\]
Hence, $v^\star$ is a global solution to the problem,
\begin{equation}\label{ib-modi-1}
\begin{cases}
v^\star_{t} =(\rho^\star(v^\star_x))_{x} & \mbox{in $\Omega_\infty$,}\\
v^\star =v_0 & \mbox{on $\Omega\times \{t=0\}$},\\
\sigma^\star(v^\star_x)v^\star_{xx}=0 & \mbox{on $\partial\Omega\times(0,\infty)$,}
\end{cases}
\end{equation}
where
\[
v_0(x):=\int_0^x u_0(y)\,dy \;\;\forall x\in\bar\Omega.
\]

In turn, we define
\[
w^\star(x,t)=\int_0^t \rho^\star (v^\star_x(x,s))\,ds + \int_0^x v_0(y)\,dy\;\;\forall (x,t)\in\bar\Omega_\infty;
\]
then from (\ref{def:vstar}) and (\ref{ib-modi-1}),
\begin{equation}\label{system-1}
\begin{cases}
w^\star_{x} =v^\star \\
w^\star_t =\rho^\star(v^\star_x)
\end{cases}
\;\;\mbox{in $\Omega_\infty$},
\end{equation}
$v^\star_x=u^\star\in C^{2,1}(\bar\Omega_\infty;[0,1])$, and $z^\star:=(v^\star,w^\star)\in (C^{3,1}\times C^{4,2})(\bar\Omega_\infty;\R^2).$

Define
\begin{equation}\label{proof-3.2-3}
Q=\{(x,t)\in\Omega_\infty\,|\, s^-(r_1)<v^\star_x(x,t)<s^+(r_2)\}.
\end{equation}
From (\ref{proof-3.2-1}) and (\ref{proof-3.2-2}), we have $\bar{u}_0<s^-(r_1)$. So it follows from (iii) above that there exists a finite time $T_1>0$ with
\[
v^\star_x(x,t)=u^\star(x,t)< s^-(r_1)\;\;\forall (x,t)\in\Omega\times[T_1,\infty);
\]
thus,
\[
Q\subset \Omega\times(0,T_1).
\]
Since $\bar{u}_0<s^-(r_1)<s^-_2\le s^-_0\le M_0$, we can take a point $x_0\in\Omega$ such that
\[
s^-(r_1)<v^\star_x(x_0,0)=u_0(x_0)<s^+(r_2).
\]
Then by continuity, we can take an $r_0\in(0,T_1)$ with $r_0<\min\{x_0,L-x_0\}$ so small that
\[
s^-(r_1)<v^\star_x(x,t)<s^+(r_2)\;\;\forall(x,t)\in (\Omega\times[0,\infty)) \cap B_{r_0}(x_0,0);
\]
thus, $\Omega_\infty \cap B_{r_0}(x_0,0)\subset Q\ne\emptyset$ so that
\[
(x_0-r_0,x_0+r_0)\times\{0\}\subset\underline{\bar{Q}\cap(\Omega\times\{0\})\ne\emptyset}
\]
and that \underline{$Q$ is a nonempty bounded open subset of $\Omega_\infty.$}

Following the notations in section \ref{sec:generic}, note from (\ref{proof-3.2-1}), (\ref{proof-3.2-2}), (\ref{rho-modi}), (\ref{system-1}), and (\ref{proof-3.2-3}) that (\ref{subsolution-1}) holds in $Q$ and (\ref{subsolution-2}) holds for all sufficiently small $\delta>0$.
Thus, for any fixed $\epsilon>0,$ we can apply Theorem \ref{thm:two-wall} (with $t_1=0$ and $t_2=T_1$ in the current case) to obtain a function $z=z_\epsilon\in W^{1,\infty}(\Omega^{T_1}_0;\R^2)$ satisfying the following, where $z=(v,w)$:
\begin{itemize}
\item[(1)] $z$ is a solution of inclusion (\ref{inclusion});
\item[(2)] $z=z^\star$\;\;on $\bar{\Omega}^{T_1}_0\setminus Q;$
\item[(3)]$\nabla z=\nabla z^\star$\;\;a.e. on $\Omega^{T_1}_0\cap\partial Q;$
\item[(4)] $\|z-z^\star\|_{L^\infty(\Omega^{T_1}_0;\R^2)}<\epsilon;$
\item[(5)] $\|v_t-v^\star_t\|_{L^\infty(\Omega^{T_1}_0)}<\epsilon;$
\item[(6)] for any nonempty open set $O\subset Q,$
\[
\underset{O}{\mathrm{ess\,osc}}\, v_x \ge d_0,
\]
where $d_0=\min_{[r_1,r_2]}\omega_2-\max_{[r_1,r_2]}\omega_1=s^+(r_1)-s^-(r_2)>0.$
\end{itemize}
For each $(x,t)\in\bar{\Omega}\times [T_1,\infty)$, define
\begin{equation}\label{def:z-1}
z(x,t)=z^\star(x,t);
\end{equation}
then from (2), $z=(v,w)\in W^{1,\infty}(\Omega_T;\R^2)$ for all $T>0$.

For each $t\ge 0,$ define
\begin{equation}\label{def:u-sol}
u(\cdot,t)=v_x(\cdot,t)\;\;\mbox{a.e. in $\Omega$};
\end{equation}
then $u\in L^\infty(\Omega_T)$ for all $T>0$.
First, from (6), \underline{(c) in Theorem \ref{thm:(ii)-1} is fulfilled.}
Also, note from (2) and (\ref{def:vstar}) that for every $t\ge 0,$
\[
\begin{split}
\int_\Omega u(x,t)\,dx & =\int_\Omega v_x(x,t)\,dx=v(L,t)-v(0,t) \\
&= v^\star(L,t)-v^\star(0,t) =\int_\Omega v^\star_x(x,t)\,dx= \int_\Omega u^\star(x,t)\,dx;
\end{split}
\]
hence, \underline{(d) in Theorem \ref{thm:(ii)-1} follows from (ii).}
From (2), (\ref{def:vstar}), and (\ref{def:z-1}), we have
\[
u=v_x=v^\star_x=u^\star\in[0,1]\;\;\mbox{in $\Omega_\infty\setminus\bar{Q}$},
\]
and from (3) and (\ref{def:vstar}), we get
\[
u=v_x=v^\star_x=u^\star\in[0,1]\;\;\mbox{a.e. in $\Omega_\infty\cap\partial Q$}.
\]
In particular, \underline{(a) in Theorem \ref{thm:(ii)-1} is satisfied.}
From (1), we have
\[
\nabla z\in K(v)\;\;\mbox{a.e. in $Q$;}
\]
that is, a.e. in $Q$,
\begin{equation}\label{property-v-w}
\begin{cases}
  u=v_x\in[s^-(r_1),s^-(r_2)]\cup[s^+(r_1),s^+(r_2)],  \\
  w_t=\rho(v_x), \\
  w_x=v.
\end{cases}
\end{equation}
In particular, we see that $u\in[0,1]$ a.e. in $\Omega_\infty$, that is, \underline{$u\in L^\infty(\Omega_\infty;[0,1])$} and that \underline{(b) in  Theorem \ref{thm:(ii)-1} holds.}

Now, we check that $u$ is a global weak solution to problem (\ref{ib-P}). To do so, fix any $T>0$ and test function $\varphi\in C^\infty(\bar\Omega\times[0,T])$ with
\begin{equation}\label{bdry:testfn}
\varphi=0\;\;\mbox{on $\bar\Omega\times\{t=T\}$}\;\;\mbox{and}\;\;\varphi_x=0\;\;\mbox{on $\partial\Omega\times[0,T]$}.
\end{equation}
Observe from (2), (3), (\ref{rho-modi}), (\ref{def:vstar}), (\ref{ib-modi-1}), (\ref{system-1}), (\ref{proof-3.2-3}), (\ref{def:u-sol}), (\ref{property-v-w}), (\ref{bdry:testfn}), and the integration by parts that
\[
\begin{split}
\int_0^T & \int_0^L(u\varphi_t+\rho(u)\varphi_{xx})\,dxdt = \int_0^T\int_0^L (v_x\varphi_t+w_t\varphi_{xx})\,dxdt \\
= & -\int_0^T\int_0^L (v\varphi_{tx}+w\varphi_{xxt})\,dxdt +\int_0^T (v(L,t)\varphi_t(L,t)-v(0,t)\varphi_t(0,t))\,dt \\
& +\int_0^L (w(x,T)\varphi_{xx}(x,T)-w(x,0)\varphi_{xx}(x,0)) \,dx  \\
= & -\int_0^T\int_0^L (v\varphi_{tx}-w_x\varphi_{xt})\,dxdt +\int_0^T (v(L,t)\varphi_t(L,t)-v(0,t)\varphi_t(0,t))\,dt \\
& +\int_0^L (w(x,T)\varphi_{xx}(x,T)-w(x,0)\varphi_{xx}(x,0)) \,dx  \\
& +\int_0^T (w(0,t)\varphi_{xt}(0,t)-w(L,t)\varphi_{xt}(L,t)) \,dt \\
= & \int_0^T (v^\star(L,t)\varphi_t(L,t)-v^\star(0,t)\varphi_t(0,t))\,dt - \int_0^L w^\star(x,0)\varphi_{xx}(x,0) \,dx  \\
= & -\int_0^T (v^\star_t(L,t)\varphi(L,t)-v^\star_t(0,t)\varphi(0,t))\,dt - \int_0^L w^\star(x,0)\varphi_{xx}(x,0) \,dx  \\
& +v^\star(L,T)\varphi(L,T)-v^\star(L,0)\varphi(L,0)-v^\star(0,T)\varphi(0,T) +v^\star(0,0)\varphi(0,0) \\
= & - \int_0^L w^\star(x,0)\varphi_{xx}(x,0) \,dx  -v^\star(L,0)\varphi(L,0)\\
= & \int_0^L v^\star(x,0)\varphi_{x}(x,0)\,dx -v^\star(L,0)\varphi(L,0) +w^\star(0,0)\varphi_{x}(0,0)-w^\star(L,0)\varphi_{x}(L,0)\\
= & -\int_0^L u_0(x)\varphi(x,0)\,dx.
\end{split}
\]
Thus, according to Definition \ref{def:global-weak-sol}, \underline{$u$ is a global weak solution to (\ref{ib-P}).}

Since $u^\star$ itself is not a global weak solution to problem (\ref{ib-P}), it follows from (4) that \underline{there are infinitely many global weak solutions to (\ref{ib-P}) that satisfy properties} \underline{(a)--(f) in Theorem \ref{thm:(ii)-1}.}

The proof of Theorem \ref{thm:(ii)-1} is now complete.
\end{proof}

\begin{proof}[Proof of Theorem \ref{thm:(ii)-2}]
In this case, we assume that problem (\ref{ib-P}) is of type (II),
\begin{equation}\label{proof-3.3-0-0}
s^-_2\le\bar{u}_0<s^-_0\le M_0,
\end{equation}
and
\begin{equation}\label{proof-3.3-0-1}
\rho(s^-_1)\le r_1<r_2\le r^*=\rho(1).
\end{equation}

In order to fit into the setup in section \ref{sec:generic}, for any $r_1\le r\le r_2$, define
\[
\omega_1(r)=s^-(r)\;\;\mbox{and}\;\;\omega_2(r)=s^+(r);
\]
then $\omega_1,\omega_2\in C([r_1,r_2])$, and
\[
\max_{[r_1,r_2]}\omega_1=s^-(r_2)<s^+(r_1)=\min_{[r_1,r_2]}\omega_2.
\]

Next, using elementary calculus, we can choose a function $\rho^\star_1\in C^3(\R)$ such that
\begin{equation}\label{proof-3.3-1}
\begin{cases}
  \rho^\star_1=\rho\;\;\mbox{on $[-1,s^-(r_1)]\cup[s^+(r_2),2]$,} \\[1mm]
  \mbox{$\exists\theta_1>\theta_0>0$ s.t.}\;\theta_0\le\sigma^\star_1:=(\rho^\star_1)'\le\theta_1\;\;\mbox{in $\R,$} \\[1mm]
  \rho^\star_1<\rho\;\;\mbox{on $(s^-(r_1),s^-(r_2)],$}\;\;\mbox{and}\;\;\rho^\star_1>\rho\;\;\mbox{on $[s^+(r_1),s^+(r_2)).$}
\end{cases}
\end{equation}
Then from \cite[Theorem 13.24]{Ln} and Lemma \ref{lem:classical}, the modified problem,
\begin{equation}\label{proof-3.3-2}
\begin{cases}
u^\star_{t} =(\sigma^\star_1(u^\star)u^\star_x)_x=(\rho^\star_1(u^\star))_{xx} & \mbox{in $\Omega_\infty$,}\\
u^\star =u_0 & \mbox{on $\Omega\times \{t=0\}$},\\
\sigma^\star_1(u^\star)u^\star_x=0 & \mbox{on $\partial\Omega\times(0,\infty)$,}
\end{cases}
\end{equation}
possesses a global solution \underline{$u^\star_1\in C^{2,1}(\bar\Omega_\infty;[0,1])$ with $u^\star_1\in C^{2+a,1+\frac{a}{2}}(\bar\Omega_T;[0,1])$ for} \underline{each $T>0$} such that
\begin{itemize}
\item[(i$_1$)] $
0\le\min_{\bar\Omega}u^\star_1(\cdot,s)\le u^\star_1(x,t)\le\max_{\bar\Omega}u^\star_1(\cdot,s)\le 1\;\;\forall x\in\bar\Omega,\,\forall t\ge s\ge0;
$
\item[(ii$_1$)] $
\int_\Omega u^\star_1(x,t)\,dx=L \bar{u}_0\;\;\mbox{for all $t\ge0$};
$
\item[(iii$_1$)] $
\|u^\star_1(\cdot,t)-\bar{u}_0\|_{L^\infty(\Omega)}\to 0\;\;\mbox{as $t\to\infty$}.
$
\end{itemize}

Thanks to (iii$_1$), we can take a finite time $T_1>0$ such that
\begin{equation}\label{proof-3.3-3}
u^\star_1(x,t)\le \frac{\bar{u}_0+s^-_0}{2}\;\;\forall x\in\bar\Omega,\,\forall t\ge T_1.
\end{equation}
Define
\begin{equation}\label{proof-3.3-3-0}
u_1(x)=u^\star_1(x,T_1)\;\;\forall x\in\bar\Omega;
\end{equation}
then
\[
u_1\in C^{2+a}(\bar\Omega;[0,1])\;\;\mbox{and}\;\;u_1'(L)=u_1'(0)=0.
\]
Also, from (i$_1$) and (\ref{proof-3.3-3}), we have
\begin{equation}\label{proof-3.3-3-1}
m_0\le m_1:=\min_{\bar\Omega}u_1\le\max_{\bar\Omega}u_1=: M_1\le \frac{\bar{u}_0+s^-_0}{2}<s^-_0\le M_0.
\end{equation}
Relying on elementary calculus, we can choose a function $\rho^\star_2\in C^3(\R)$ such that
\begin{equation}\label{proof-3.3-3-2}
\rho^\star_2=\rho\;\;\mbox{on $[-1,M_1]$}\;\;\mbox{and}\;\;\theta_2\le\sigma^\star_2 :=(\rho^\star_2)'\le\theta_3\;\;\mbox{in $\R$},
\end{equation}
for some constants $\theta_3>\theta_2>0.$
Then from \cite[Theorem 13.24]{Ln} and Lemma \ref{lem:classical}, the modified problem,
\begin{equation}\label{proof-3.3-4}
\begin{cases}
u^\star_{t} =(\rho^\star_2(u^\star))_{xx} & \mbox{in $\Omega\times(T_1,\infty)$,}\\
u^\star =u_1 & \mbox{on $\Omega\times \{t=T_1\}$},\\
\sigma^\star_2(u^\star)u^\star_x=0 & \mbox{on $\partial\Omega\times(T_1,\infty)$,}
\end{cases}
\end{equation}
admits a global classical solution \underline{$u^\star_2\in C^{2,1}(\bar\Omega\times[T_1,\infty);[0,1])$ with}
\[
\underline{u^\star_2\in C^{2+a,1+\frac{a}{2}}(\bar\Omega\times[T_1,T];[0,1])\;\;\forall T>T_1}
\]
satisfying
\begin{itemize}
\item[(i$_2$)] $
0\le\min_{\bar\Omega}u^\star_2(\cdot,s)\le u^\star_2(x,t)\le\max_{\bar\Omega}u^\star_2(\cdot,s)\le 1\;\;\forall x\in\bar\Omega,\,\forall t\ge s\ge T_1;
$
\item[(ii$_2$)] $
\int_\Omega u^\star_2(x,t)\,dx=L \bar{u}_1\;\;\mbox{for all $t\ge T_1$};
$
\item[(iii$_2$)] $
\|u^\star_2(\cdot,t)-\bar{u}_1\|_{L^\infty(\Omega)}\to 0\;\;\mbox{as $t\to\infty$}.
$
\end{itemize}
Note from (\ref{proof-3.3-3-1}), (\ref{proof-3.3-3-2}), and (i$_2$) that $u^\star_2$ is a global classical solution to problem (\ref{proof-3.3-4}) with $\rho^\star_2$ and $\sigma^\star_2$ replaced by $\rho$ and $\sigma$, respectively.
In particular, \underline{(\ref{proof-3.3-3-1}) and (i$_2$)} \underline{imply (e) in Theorem \ref{thm:(ii)-2}.}
Also, note from (ii$_1$), (\ref{proof-3.3-4}), and the definition of $u_1$ that $\bar{u}_1=\bar{u}_0$ and that \underline{$u^\star_2=u^\star_1$ on $\bar\Omega\times\{t=T_1\};$} thus, \underline{(f) in Theorem \ref{thm:(ii)-2} follows from} \underline{(iii$_2$).}

For every $(x,t)\in\bar\Omega_{T_1}$, we define
\begin{equation}\label{proof-3.3-4-1}
v^\star(x,t)=\int_0^x u_1^\star(y,t)\,dy+\int_0^t (u_1^\star)_x(0,s)\,ds
\end{equation}
and
\[
w^\star(x,t)=\int_0^t \rho_1^\star(v^\star_x(x,s))\,ds+\int_0^x v_0(y)\,dy,
\]
where
\[
v_0(x):=\int_0^x u_0(y)\,dy\;\;\forall x\in\bar\Omega.
\]
Then $z^\star:=(v^\star,w^\star)\in(C^{3,1}\times C^{4,2})(\bar\Omega_{T_1})$ satisfies
\begin{equation}\label{proof-3.3-5}
\begin{cases}
  v^\star_t=(\rho^\star_1(v^\star_x))_x & \mbox{in $\Omega_{T_1}$,} \\
  w^\star_x=v^\star & \mbox{in $\Omega_{T_1}$,} \\
  w^\star_t=\rho^\star_1(v^\star_x) & \mbox{in $\Omega_{T_1}$,} \\
  v^\star=v_0 & \mbox{on $\Omega\times\{t=0\}$,} \\
  \sigma^\star_1(v^\star_x)v^\star_{xx}=0 & \mbox{on $\partial\Omega\times(0,T_1)$.}
\end{cases}
\end{equation}

Define
\begin{equation}\label{proof-3.3-6}
Q=\{(x,t)\in\Omega_{T_1}\,|\, s^-(r_1)<v^\star_x(x,t)<s^+(r_2)\}.
\end{equation}
Thanks to (\ref{proof-3.3-0-0}) and (\ref{proof-3.3-0-1}), we can pick a point $x_0\in\Omega$ such that
\[
s^-(r_1)<v^\star_x(x_0,0)=u_0(x_0)<s^+(r_2).
\]
So we can deduce that \underline{$Q$ is a nonempty bounded open subset of $\Omega_{T_1}$ with}
\[
\underline{\bar{Q}\cap(\Omega\times\{0\})\ne\emptyset.}
\]

Following the notations in section \ref{sec:generic}, note from (\ref{proof-3.3-1}), (\ref{proof-3.3-5}), and (\ref{proof-3.3-6}) that (\ref{subsolution-1}) holds in $Q$.
Thus, for any fixed $\epsilon>0,$ we can apply Theorem \ref{thm:two-wall} (with $t_1=0$ and $t_2=T_1$ in the current case) to obtain a function $z=z_\epsilon\in W^{1,\infty}(\Omega^{T_1}_0;\R^2)$ satisfying the following, where $z=(v,w)$:
\begin{itemize}
\item[(1)] $z$ is a solution of inclusion (\ref{inclusion});
\item[(2)] $z=z^\star$\;\;on $\bar{\Omega}^{T_1}_0\setminus Q;$
\item[(3)]$\nabla z=\nabla z^\star$\;\;a.e. on $\Omega^{T_1}_0\cap\partial Q;$
\item[(4)] $\|z-z^\star\|_{L^\infty(\Omega^{T_1}_0;\R^2)}<\epsilon;$
\item[(5)] $\|v_t-v^\star_t\|_{L^\infty(\Omega^{T_1}_0)}<\epsilon;$
\item[(6)] for any nonempty open set $O\subset Q,$
\[
\underset{O}{\mathrm{ess\,osc}}\, v_x \ge d_0,
\]
where $d_0=\min_{[r_1,r_2]}\omega_2-\max_{[r_1,r_2]}\omega_1=s^+(r_1)-s^-(r_2)>0.$
\end{itemize}

For each $0\le t< T_1,$ define
\begin{equation}\label{proof-3.3-6-1}
u(\cdot,t)=v_x(\cdot,t)\;\;\mbox{a.e. in $\Omega$},
\end{equation}
and for each $(x,t)\in\bar{\Omega}\times [T_1,\infty)$, define
\begin{equation}\label{proof-3.3-7}
u(x,t)=u^\star_2(x,t);
\end{equation}
then $u\in L^\infty(\Omega_\infty).$ First, note from (6) that \underline{(c) in Theorem \ref{thm:(ii)-2} holds.}
Also, it follows from (2) and (\ref{proof-3.3-4-1}) that for $0\le t<T_1,$
\[
\begin{split}
\int_\Omega u(x,t)\,dx= & \,\int_\Omega v_x(x,t)\,dx=v(L,t)-v(0,t)\\
= &\, v^\star(L,t)-v^\star(0,t)= \int_\Omega v^\star_x(x,t)\,dx= \int_\Omega u_1^\star(x,t)\,dx.
\end{split}
\]
This together with (ii$_1$) and (ii$_2$) implies \underline{(d) in Theorem \ref{thm:(ii)-2}.}
From (2) and (\ref{proof-3.3-4-1}),
\[
u=v_x=v^\star_x=u^\star_1\in[0,1]\;\;\mbox{in $\Omega_{T_1}\setminus \bar{Q}$},
\]
and from (3) and (\ref{proof-3.3-4-1}),
\[
u=v_x=v^\star_x=u^\star_1\in[0,1]\;\;\mbox{a.e. in $\Omega_{T_1}\cap\partial Q$}.
\]
In particular, with (\ref{proof-3.3-7}), \underline{(a) in Theorem \ref{thm:(ii)-2} is satisfied.} From (1), a.e. in $Q$,
\begin{equation}\label{proof-3.3-7-1}
\begin{cases}
  u=v_x\in[s^-(r_1),s^-(r_2)]\cup[s^+(r_1),s^+(r_2)],  \\
  w_t=\rho(v_x), \\
  w_x=v.
\end{cases}
\end{equation}
We can now conclude that \underline{$u\in L^\infty(\Omega_\infty;[0,1])$} and that \underline{(b) in Theorem \ref{thm:(ii)-2} holds.}

Next, we show that $u$ is a global weak solution to problem (\ref{ib-P}). Choose any $T>0$ and test function $\varphi\in C^\infty(\bar\Omega\times[0,T])$ with
\begin{equation}\label{proof-3.3-8}
\varphi=0\;\;\mbox{on $\bar\Omega\times\{t=T\}$}\;\;\mbox{and}\;\;\varphi_x=0\;\;\mbox{on $\partial\Omega\times[0,T]$}.
\end{equation}
We have to check that equality (\ref{eq:weaksol}) holds. Since the case that $T\le T_1$ can be handled similarly as in the proof of Theorem \ref{thm:(ii)-1}, we only deal with the case that $T>T_1.$ First, we decompose
\[
\begin{split}
\int_0^T\int_0^L(u\varphi_t &+\rho(u)\varphi_{xx})\,dxdt \\
= & \bigg(\int_0^{T_1}\int_0^L+ \int_{T_1}^T\int_0^L\bigg) (u\varphi_t+\rho(u)\varphi_{xx})\,dxdt=:I_1+I_2.
\end{split}
\]
From Proposition \ref{prop:classical-global} and (\ref{proof-3.3-4}) with its following remark, we have
\[
I_2=-\int_{0}^L u_1(x)\varphi(x,T_1)\,dx.
\]
Observe from (2), (3), (\ref{proof-3.3-1}), (\ref{proof-3.3-2}), (\ref{proof-3.3-3-0}), (\ref{proof-3.3-4-1}), (\ref{proof-3.3-5}), (\ref{proof-3.3-6}), (\ref{proof-3.3-6-1}), (\ref{proof-3.3-7-1}),  and  (\ref{proof-3.3-8})   that
\[
\begin{split}
I_1 = & \int_0^{T_1}\int_0^L (v_x\varphi_t+w_t\varphi_{xx})\,dxdt \\
= & -\int_0^{T_1}\int_0^L (v\varphi_{tx}+w\varphi_{xxt})\,dxdt +\int_0^{T_1} (v(L,t)\varphi_t(L,t)-v(0,t)\varphi_t(0,t))\,dt \\
& +\int_0^L (w(x,T_1)\varphi_{xx}(x,T_1)-w(x,0)\varphi_{xx}(x,0)) \,dx  \\
= & -\int_0^{T_1}\int_0^L (v\varphi_{tx}-w_x\varphi_{xt})\,dxdt +\int_0^{T_1} (v(L,t)\varphi_t(L,t)-v(0,t)\varphi_t(0,t))\,dt \\
& +\int_0^L (w(x,T_1)\varphi_{xx}(x,T_1)-w(x,0)\varphi_{xx}(x,0)) \,dx  \\
& +\int_0^{T_1} (w(0,t)\varphi_{xt}(0,t)-w(L,t)\varphi_{xt}(L,t)) \,dt \\
= & \int_0^{T_1} (v^\star(L,t)\varphi_t(L,t)-v^\star(0,t)\varphi_t(0,t))\,dt \\
& + \int_0^L (w^\star(x,T_1)\varphi_{xx}(x,T_1)-w^\star(x,0)\varphi_{xx}(x,0)) \,dx  \\
= & -\int_0^{T_1} (v^\star_t(L,t)\varphi(L,t)-v^\star_t(0,t)\varphi(0,t))\,dt \\
& + \int_0^L (w^\star(x,T_1)\varphi_{xx}(x,T_1)-w^\star(x,0)\varphi_{xx}(x,0)) \,dx \\
& +v^\star(L,T_1)\varphi(L,T_1)-v^\star(L,0)\varphi(L,0)-v^\star(0,T_1)\varphi(0,T_1) +v^\star(0,0)\varphi(0,0) \\
= & \int_0^L (w^\star(x,T_1)\varphi_{xx}(x,T_1)-w^\star(x,0)\varphi_{xx}(x,0)) \,dx \\
& +v^\star(L,T_1)\varphi(L,T_1)-v^\star(L,0)\varphi(L,0)-v^\star(0,T_1)\varphi(0,T_1) \\
= & \int_0^L (-v^\star(x,T_1)\varphi_{x}(x,T_1)+v^\star(x,0)\varphi_{x}(x,0))\,dx \\
& +v^\star(L,T_1)\varphi(L,T_1)-v^\star(L,0)\varphi(L,0)-v^\star(0,T_1)\varphi(0,T_1) \\
& +w^\star(L,T_1)\varphi_x(L,T_1)-w^\star(0,T_1)\varphi_x(0,T_1) -w^\star(L,0)\varphi_x(L,0)+w^\star(0,0)\varphi_x(0,0) \\
= & \int_0^L (u^\star_1(x,T_1)\varphi(x,T_1)-u^\star_1(x,0)\varphi(x,0))\,dx \\
& +v^\star(L,T_1)\varphi(L,T_1)-v^\star(L,0)\varphi(L,0)-v^\star(0,T_1)\varphi(0,T_1) \\
& -v^\star(L,T_1)\varphi(L,T_1)+v^\star(0,T_1)\varphi(0,T_1)+v^\star(L,0)\varphi(L,0) -v^\star(0,0)\varphi(0,0) \\
= & \int_0^L (u_1(x)\varphi(x,T_1)-u_0(x,0)\varphi(x,0))\,dx.
\end{split}
\]
Thus, summing $I_1$ and $I_2$, we obtain (\ref{eq:weaksol}); hence, \underline{$u$ is a global weak solution to} \underline{(\ref{ib-P}).}

Since $\chi_{[0,T_1)}(t)u^\star_1(\cdot,t)+\chi_{[T_1,\infty)}(t)u^\star_2(\cdot,t)$ $(t\ge0)$ itself is not a global weak solution to problem (\ref{ib-P}), it follows from (4) that \underline{there are infinitely many global weak} \underline{solutions to (\ref{ib-P}) satisfying properties (a)--(f) in Theorem \ref{thm:(ii)-2}.}

The proof of Theorem \ref{thm:(ii)-2} is now complete.
\end{proof}

\begin{proof}[Proof of Theorem \ref{thm:(iv)}]
In this case, we assume that
\begin{equation}\label{proof-3.5-1}
\bar{u}_0\in[s^-_0,s^+_0]
\end{equation}
and
\begin{equation}\label{proof-3.5-2}
\rho(s^+_0)<r_0<r^*.
\end{equation}
Then we choose a strictly increasing sequence $\{r_{1k}\}_{k\in\N}$ in $(\rho(s^+_0),r_0)$ and a strictly decreasing sequence $\{r_{2k}\}_{k\in\N}$ in $(r_0,r^*)$ such that
\[
\lim_{k\to\infty}r_{1k}=\lim_{k\to\infty}r_{2k}=r_0.
\]

In order to fit into the setup in section \ref{sec:generic}, for any $\rho(s^+_0)\le r\le r^*$, define
\[
\omega_1(r)=s^-(r)\;\;\mbox{and}\;\;\omega_2(r)=s^+(r);
\]
then $\omega_1,\omega_2\in C([\rho(s^+_0),r^*])$, and
\[
\max_{[\rho(s^+_0),r^*]}\omega_1=s^-_2\le s^-_0<s^+_0=\min_{[\rho(s^+_0),r^*]}\omega_2.
\]

Next, using elementary calculus, for each $k\in\N$, we can choose a function $\rho^\star_k\in C^3(\R)$ such that
\begin{equation}\label{proof-3.5-3}
\begin{cases}
  \rho^\star_k=\rho\;\;\mbox{on $[-1,s^-(r_{1k})]\cup[s^+(r_{2k}),2]$,} \\[1mm]
  \mbox{$\exists\theta_{1k}>\theta_{0k}>0$ s.t.}\;\theta_{0k}\le\sigma^\star_k:=(\rho^\star_k)'\le\theta_{1k}\;\;\mbox{in $\R,$} \\[1mm]
  \rho^\star_k<\rho\;\;\mbox{on $(s^-(r_{1k}),s^-(r_{2k})],$}\;\;\mbox{and}\;\;\rho^\star_k>\rho\;\;\mbox{on $[s^+(r_{1k}),s^+(r_{2k})).$}
\end{cases}
\end{equation}

From \cite[Theorem 13.24]{Ln} and Lemma \ref{lem:classical}, the modified problem,
\begin{equation}\label{proof-3.5-4}
\begin{cases}
u^\star_{t} =(\sigma^\star_1(u^\star)u^\star_x)_x=(\rho^\star_1(u^\star))_{xx} & \mbox{in $\Omega_\infty$,}\\
u^\star =u_0 & \mbox{on $\Omega\times \{t=0\}$},\\
\sigma^\star_1(u^\star)u^\star_x=0 & \mbox{on $\partial\Omega\times(0,\infty)$,}
\end{cases}
\end{equation}
possesses a global solution \underline{$u^\star_1\in C^{2,1}(\bar\Omega_\infty;[0,1])$ with $u^\star_1\in C^{2+a,1+\frac{a}{2}}(\bar\Omega_T;[0,1])$ for} \underline{each $T>0$} such that
\begin{itemize}
\item[(i$_1$)] $
0\le\min_{\bar\Omega}u^\star_1(\cdot,s)\le u^\star_1(x,t)\le\max_{\bar\Omega}u^\star_1(\cdot,s)\le 1\;\;\forall x\in\bar\Omega,\,\forall t\ge s\ge0;
$
\item[(ii$_1$)] $
\int_\Omega u^\star_1(x,t)\,dx=L \bar{u}_0\;\;\mbox{for all $t\ge0$};
$
\item[(iii$_1$)] $
\|u^\star_1(\cdot,t)-\bar{u}_0\|_{L^\infty(\Omega)}\to 0\;\;\mbox{as $t\to\infty$}.
$
\end{itemize}
Thanks to (\ref{proof-3.5-1}), (\ref{proof-3.5-2}), and (iii$_1$), we can take a finite time $T_1>0$ such that
\begin{equation}\label{proof-3.5-5}
s^-(r_0)<u^\star_1(x,T_1)<s^+(r_0)\;\;\forall x\in\bar\Omega.
\end{equation}
Define
\[
u_1(x)=u^\star_1(x,T_1)\;\;\forall x\in\bar\Omega;
\]
then $u_1\in C^{2+a}(\bar\Omega;[0,1])$ and $u_1'(0)=u_1'(L)=0.$ Note also from (ii$_1$) that $\bar{u}_1=\bar{u}_0$.
For later use, let us take $T_j:=T_1+j-1$ for all $j\ge 2$ and $T_0:=0$.

Having chosen a function $u_k\in C^{2+a}(\bar\Omega;[0,1])$ with $u_k'(0)=u_k'(L)=0$ and $\bar{u}_k=\bar{u}_{k-1}$ for some $k\in\N,$ it follows from \cite[Theorem 13.24]{Ln} and Lemma \ref{lem:classical} that the modified problem,
\begin{equation}\label{proof-3.5-6}
\begin{cases}
u^\star_{t} =(\sigma^\star_{k+1}(u^\star)u^\star_x)_x=(\rho^\star_{k+1}(u^\star))_{xx} & \mbox{in $\Omega\times(T_k,\infty)$,}\\
u^\star =u_k & \mbox{on $\Omega\times \{t=T_k\}$},\\
\sigma^\star_{k+1}(u^\star)u^\star_x=0 & \mbox{on $\partial\Omega\times(T_k,\infty)$,}
\end{cases}
\end{equation}
admits a global solution $u^\star_{k+1}\in C^{2,1}(\bar\Omega\times[T_k,\infty);[0,1])$ with $u^\star_{k+1}\in C^{2+a,1+\frac{a}{2}}(\bar\Omega\times[T_k,T];[0,1])$ for each $T>T_k$ such that
\begin{itemize}
\item[(i$_{k+1}$)] $
0\le\min_{\bar\Omega}u^\star_{k+1}(\cdot,s)\le u^\star_{k+1}(x,t)\le\max_{\bar\Omega}u^\star_{k+1}(\cdot,s)\le 1\;\;\forall x\in\bar\Omega,\,\forall t\ge s\ge T_{k};
$
\item[(ii$_{k+1}$)] $
\int_\Omega u^\star_{k+1}(x,t)\,dx=L \bar{u}_k\;\;\mbox{for all $t\ge T_k$};
$
\item[(iii$_{k+1}$)] $
\|u^\star_{k+1}(\cdot,t)-\bar{u}_k\|_{L^\infty(\Omega)}\to 0\;\;\mbox{as $t\to\infty$}.
$
\end{itemize}
Define
\[
u_{k+1}(x)=u^\star_{k+1}(x,T_{k+1})\;\;\forall x\in\bar\Omega;
\]
then $u_{k+1}\in C^{2+a}(\bar\Omega;[0,1])$ and $u_{k+1}'(0)=u_{k+1}'(L)=0.$ Note also from (ii$_{k+1}$) that $\bar{u}_{k+1}=\bar{u}_k$.

For every $k\in\N$ and $(x,t)\in\bar{\Omega}_{T_{k-1}}^{T_k}$, we define
\begin{equation}\label{proof-3.5-7}
v^\star_k(x,t)=\int_0^x u_k^\star(y,t)\,dy+\int_{T_{k-1}}^t (u_k^\star)_x(0,s)\,ds
\end{equation}
and
\[
w^\star_k(x,t)=\int_{T_{k-1}}^t \rho_k^\star(v^\star_x(x,s))\,ds+\int_0^x v_{k-1}(y)\,dy,
\]
where
\[
v_{k-1}(x):=\int_0^x u_{k-1}(y)\,dy\;\;\forall x\in\bar\Omega;
\]
then $z^\star_k:=(v^\star_k,w^\star_k)\in(C^{3,1}\times C^{4,2})(\bar\Omega_{T_{k-1}}^{T_k})$ satisfies
\begin{equation}\label{proof-3.5-8}
\begin{cases}
  (v^\star_k)_t=(\rho^\star_k((v^\star_k)_x))_x & \mbox{in $\Omega_{T_{k-1}}^{T_k}$,} \\
  (w^\star_k)_x=v^\star_k & \mbox{in $\Omega_{T_{k-1}}^{T_k}$,} \\
  (w^\star_k)_t=\rho^\star_k((v^\star_k)_x) & \mbox{in $\Omega_{T_{k-1}}^{T_k}$,} \\
  v^\star_k=v_{k-1} & \mbox{on $\Omega\times\{t=T_{k-1}\}$,} \\
  \sigma^\star_k((v^\star_k)_x)(v^\star_k)_{xx}=0 & \mbox{on $\partial\Omega\times(T_{k-1},T_k)$.}
\end{cases}
\end{equation}

Next, for every $k\in\N$, define
\begin{equation}\label{proof-3.5-9}
Q_k=\{(x,t)\in\Omega_{T_{k-1}}^{T_k}\,|\,s^-(r_{1k})<(v^\star_k)_x(x,t)<s^+(r_{2k})\}.
\end{equation}
From (\ref{proof-3.5-1}), (\ref{proof-3.5-3}) for $k=1$, (\ref{proof-3.5-4}), (\ref{proof-3.5-5}), (\ref{proof-3.5-7}) for $k=1$, and (\ref{proof-3.5-9}) for $k=1$, it follows that \underline{$Q_1$ is a nonempty open subset of $\Omega_{T_1}=\Omega^{T_1}_{T_0}$ with}
\[
\underline{\bar{Q}_1\cap(\Omega\times\{0\})\ne\emptyset\;\;\mbox{and}\;\; \bar\Omega\times\{T_1\}\subset \bar{Q}_1.}
\]
Also, from (\ref{proof-3.5-5}), (\ref{proof-3.5-6}) for $k\ge1$, (i$_{k+1}$) for $k\ge1$, (\ref{proof-3.5-7}) for $k\ge2$, and (\ref{proof-3.5-9}) for $k\ge2$, we have $Q_k= \Omega_{T_{k-1}}^{T_k}$ for all $k\ge2.$

Let $k\in\N$ and $\epsilon>0.$ Following the notations in section \ref{sec:generic} with $r_1=r_{1k}$, $r_2=r_{2k}$, $t_1=T_{k-1}$, and $t_2=T_k$ in the current case, note from (\ref{proof-3.5-3}), (\ref{proof-3.5-8}), and (\ref{proof-3.5-9}) that (\ref{subsolution-1}) holds in $Q=Q_k$ for $z^\star=z^\star_k$. Thus, applying Theorem \ref{thm:two-wall}, we obtain a function $z^{(k)}=z^{(k)}_\epsilon\in W^{1,\infty}(\Omega^{T_k}_{T_{k-1}};\R^2)$ satisfying the following, where $z^{(k)}=(v^{(k)},w^{(k)})$:
\begin{itemize}
\item[(1)] $z^{(k)}$ is a solution of inclusion (\ref{inclusion}) in $Q=Q_k$;
\item[(2)] $z^{(k)}=z^\star_k$\;\;on $\bar{\Omega}^{T_k}_{T_{k-1}}\setminus Q_k$($=\partial\Omega^{T_k}_{T_{k-1}}$ when $k\ge 2$);
\item[(3)]$\nabla z^{(k)}=\nabla z^\star_k$\;\;a.e. on $\Omega^{T_k}_{T_{k-1}}\cap\partial Q_k$($=\emptyset$ when $k\ge 2$);
\item[(4)] $\|z^{(k)}-z^\star_k\|_{L^\infty(\Omega^{T_k}_{T_{k-1}};\R^2)}<\epsilon;$
\item[(5)] $\|v^{(k)}_t-(v^\star_k)_t\|_{L^\infty(\Omega^{T_k}_{T_{k-1}})}<\epsilon;$
\item[(6)] for any nonempty open set $O\subset Q_k,$
\[
\underset{O}{\mathrm{ess\,osc}}\, v^{(k)}_x \ge d_k,
\]
where $d_k=\min_{[r_{1k},r_{2k}]}\omega_2-\max_{[r_{1k},r_{2k}]}\omega_1 =s^+(r_{1k})-s^-(r_{2k})>s^+_0-s^-_0>0.$
\end{itemize}
In particular, from (1), a.e. in $Q_k$,
\begin{equation}\label{proof-3.5-10}
\begin{cases}
  v^{(k)}_x\in[s^-(r_{1k}),s^-(r_{2k})]\cup[s^+(r_{1k}),s^+(r_{2k})],  \\
  w^{(k)}_t=\rho(v^{(k)}_x), \\
  w^{(k)}_x=v^{(k)}.
\end{cases}
\end{equation}

For each $k\in\N$ and $T_{k-1}\le t<T_k,$ define
\begin{equation}\label{proof-3.5-11}
u(\cdot,t)=v^{(k)}_x(\cdot,t)\;\;\mbox{a.e. in $\Omega$};
\end{equation}
then from (2), (3), (\ref{proof-3.5-7}) and (\ref{proof-3.5-10}), we have \underline{$u\in L^\infty(\Omega_\infty;[0,1])$} and
\[
u=u^\star_1\;\;\mbox{a.e. in $\Omega_{T_1}\setminus Q$.}
\]
In particular, \underline{(a) in Theorem \ref{thm:(iv)} holds.} Also, from (6), (\ref{proof-3.5-10}), and (\ref{proof-3.5-11}), \underline{(b) and} \underline{(c) in Theorem \ref{thm:(iv)} are satisfied.} Next, observe from (2), (\ref{proof-3.5-7}), and (\ref{proof-3.5-11}) that for $k\in\N$ and $T_{k-1}\le t<T_k,$
\[
\begin{split}
\int_\Omega u(x,t)\,dx= &\, \int_\Omega v^{(k)}_x(x,t)\,dx= v^{(k)}(L,t)-v^{(k)}(0,t)\\
= &\, v^\star_k(L,t)-v^\star_k(0,t)= \int_\Omega (v^\star_k)_x(x,t)\,dx = \int_\Omega u^\star_k(x,t)\,dx.
\end{split}
\]
So \underline{(d) in Theorem \ref{thm:(iv)} follows} from (ii$_{j}$) $(j=1,\ldots,k)$. From (\ref{proof-3.5-10}) and (\ref{proof-3.5-11}),
\[
\begin{split}
\|\mathrm{dist} & (u,\{s^+(r_0),s^-(r_0)\})\|_{L^\infty(\Omega\times(T_k,\infty))} \\
\le & \min\{s^+(r_{2(k+1)})-s^+(r_{1(k+1)}),s^-(r_{2(k+1)})-s^-(r_{1(k+1)})\} \to 0\;\;\mbox{as $k\to\infty$.}
\end{split}
\]
\underline{This convergence implies (e) in Theorem \ref{thm:(iv)}.}

To check that $u$ is a global weak solution to problem (\ref{ib-P}), choose any $T>0$ and test function $\varphi\in C^\infty(\bar\Omega\times[0,T])$ with
\begin{equation*}
\varphi=0\;\;\mbox{on $\bar\Omega\times\{t=T\}$}\;\;\mbox{and}\;\;\varphi_x=0\;\;\mbox{on $\partial\Omega\times[0,T]$};
\end{equation*}
then $T_{k_0-1}<T\le T_{k_0}$ for some $k_0\in\N.$
We have to show that equality (\ref{eq:weaksol}) holds. Since the case that $T\le T_1$ can be handled similarly as in the proof of Theorem \ref{thm:(ii)-1}, we only deal with the case that $T>T_1,$ that is, $k_0\ge2.$

First, we decompose
\[
\begin{split}
\int_0^T\int_0^L(u\varphi_t &+\rho(u)\varphi_{xx})\,dxdt \\
&  = \bigg(\sum_{k=1}^{k_0-1}\int_{T_{k-1}}^{T_k}\int_0^L+ \int_{T_{k_0-1}}^T\int_0^L\bigg) (u\varphi_t+\rho(u)\varphi_{xx})\,dxdt \\
& =: \sum_{k=1}^{k_0-1} I_k+I_{k_0}.
\end{split}
\]
From Proposition \ref{prop:classical-global} and (\ref{proof-3.3-4}) with its following remark, we have
\[
I_{k_0}=-\int_{0}^L u_{k_0 -1}(x)\varphi(x,T_{k_0 -1})\,dx.
\]
On the other hand, proceeding as in the proof of Theorem \ref{thm:(ii)-2}, we obtain that for $k=1,\ldots,k_0-1,$
\[
I_k=\int_0^L (u_k(x)\varphi(x,T_k)-u_{k-1}(x)\varphi(x,T_{k-1}))\,dx.
\]
Thus,
\[
\sum_{k=1}^{k_0-1} I_k+I_{k_0}=-\int_{0}^L u_{0}(x)\varphi(x,T_{0})\,dx =-\int_{0}^L u_{0}(x)\varphi(x,0)\,dx;
\]
hence, \underline{$u$ is a global weak solution to (\ref{ib-P}).}

Finally, observe from (4) $(k\in\N)$ that \underline{there are infinitely many global weak} \underline{solutions to problem (\ref{ib-P}) satisfying properties (a)--(e) in Theorem \ref{thm:(iv)}.}

The proof of Theorem \ref{thm:(iv)} is now complete.
\end{proof}

\subsection*{Acknowledgments}
H. J. Choi was supported by the National Research Foundation of Korea (grant RS-2023-00280065) and Education and Research Promotion program of KOREATECH in 2024.
Y. Koh was supported by the National Research Foundation of Korea (grant NRF-2022R1F1A1061968).
This research was supported by Kyungpook National University Research Fund, 2023.

\subsubsection*{\emph{\textbf{Ethical Statement.}}} The manuscript has not been submitted to more than one journal for simultaneous consideration.  The manuscript has not been published previously.


\subsubsection*{\emph{\textbf{Conflict of Interest:}}}  The authors declare that they have no conflict of interest.

\end{document}